\def\prob{\mathbb{P}}
\def\expt{\mathbb{E}}
\def\real{\mathbb{R}}
\def\naturals{\mathbb{N}}
\newcommand{\mcN}{\ensuremath{\mathcal{N}}}
\newcommand{\until}[1]{\{1,\dots, #1\}}
\newcommand{\subscr}[2]{#1_{\textup{#2}}}
\newcommand{\setdef}[2]{\{#1 \; | \; #2\}}
\newcommand{\seqdef}[2]{\{#1\}_{#2}}
\newcommand{\map}[3]{#1: #2 \rightarrow #3}
\newcommand{\E}[1]{\ensuremath{\mathbb{E}\left[ #1 \right]}}
\newcommand{\indicator}[1]{\ensuremath{\mathcal{I}\! \left( #1 \right)}}
\newcommand\oprocendsymbol{\hbox{$\square$}}
\newcommand\oprocend{\relax\ifmmode\else\unskip\hfill\fi\oprocendsymbol}
\newcommand{\beq}{\begin{equation}}
\newcommand{\eeq}{\end{equation}}
\newcommand{\bal}{\begin{align}}
\newcommand{\eal}{\end{align}}
\newtheorem{theorem}{Theorem}
\newtheorem{lemma}[theorem]{Lemma}
\newtheorem{remark}[theorem]{Remark}
\def\bs{\boldsymbol}
\def \etal {\emph{et al.}}
\newcommand\bit[1]{\textit{\textbf{#1}}}
\begin{document}

\begin{frontmatter}

\title{Correlated Multiarmed Bandit Problem: Bayesian Algorithms and Regret Analysis \tnoteref{lab}}

\tnotetext[lab]{This research has been supported in part by ONR grant  N00014-14-1-0635, ARO grant W911NF-14-1-0431 and NSF grant ECCS-1135724. }

\author[label1]{Vaibhav Srivastava}
\author[label2]{Paul Reverdy}
\author[label1]{Naomi Ehrich Leonard}

\address[label1]{Department of Mechanical \& Aerospace Engineering, Princeton University, New Jersey, USA, \emph{\texttt{\{vaibhavs, naomi\}@princeton.edu}}}
\address[label2]{Department of Electrical and Systems Engineering, University of Pennsylvania, Pennsylvania, USA \emph{ \texttt{preverdy@seas.upenn.edu}}}

\begin{abstract}
We consider the correlated multiarmed bandit (MAB) problem in which the rewards associated with each arm are modeled by a multivariate Gaussian random variable, and we investigate the influence of the assumptions in the Bayesian prior on the performance of  the upper credible limit (UCL) algorithm and a new correlated UCL algorithm. We rigorously characterize the influence of accuracy, confidence, and correlation scale in the prior on the decision-making performance of the algorithms. 
Our results show how priors and correlation structure can be leveraged to improve performance. 
\end{abstract}

\begin{keyword}
Multiarmed bandit problem, Bayesian algorithms, Decision-making, Spatial search, Upper credible limit algorithm, Influence of priors
\end{keyword}

\end{frontmatter}

\section{Introduction}

MAB problems~\cite{JG-KG-RW:11} are a class of resource allocation problems in which a decision-maker allocates a single resource by sequentially choosing one among a set of competing alternative options called arms. In the so-called stationary MAB problem, a decision-maker at each discrete time instant chooses an arm and collects a reward drawn from an unknown stationary probability distribution associated with the selected arm. The objective of the decision-maker is to maximize the total expected reward aggregated over the sequential allocation process. These problems capture the fundamental trade-off between exploration (collecting more information to reduce uncertainty) and exploitation (using the current information to maximize the immediate reward), and they model a variety of robotic missions including search and surveillance.

Recently, there has been significant interest in Bayesian algorithms for the MAB problem~\cite{EK-OC-AG:12,NS-AK-SMK-MS:12,SA-NG:12, PR-VS-NEL:13d}. Bayesian methods are attractive because they allow for incorporating prior knowledge and spatial structure of the problem through the prior in the inference process.  

In this paper, we investigate the influence of the prior on the performance of a Bayesian algorithm for the MAB problem with Gaussian rewards. 



MAB problems became popular following the seminal paper by Robbins~\cite{HR:52} and gathered interest in diverse areas including controls~\cite{RA-MVH-DT:88, VA-PV-JW:87}, robotics~\cite{JLN-MD-EF:08,MYC-JL-FSH:13,VS-PR-NEL:14a}, machine learning~\cite{MB-YS-AS:09, FR-RK-TJ:08}, economics~\cite{BPM-JJM:87}, ecology~\cite{JRK-AK-PT:78,VS-PR-NEL:13e}, and neuroscience~\cite{RCW-etal:14, PR-VS-RCW-NEL:15}. 
Much recent work on  MAB problems focuses on a quantity termed \emph{cumulative expected regret}. 
The cumulative expected regret of a sequence of decisions is the cumulative difference between the expected reward of the options chosen and the maximum possible expected reward. 
In a ground-breaking work, Lai and Robbins~\cite{TLL-HR:85} established a logarithmic lower bound on the expected number of times a sub-optimal arm needs to be sampled by an optimal policy in a frequentist setting, thereby showing that cumulative expected regret is bounded below by a logarithmic function of time. Their work established the best possible performance of any solution to the standard MAB problem.  They also developed an algorithm based on an upper confidence bound on estimated reward and showed that this algorithm  achieves the performance bound asymptotically. 

In the following, we use the phrase \emph{logarithmic regret} to refer to cumulative expected regret being bounded above by a logarithmic function of time, i.e., having the same order of growth rate as the optimal solution. 


In the context of the bounded MAB problem, i.e., the MAB problem in which the reward is sampled from a distribution with a bounded support,  Auer~\etal~\cite{PA-NCB-PF:02} developed upper confidence bound-based algorithms that achieve logarithmic regret uniformly in time; see~\cite{SB-NCB:12} for an extensive survey of upper confidence bound-based algorithms. 



Bayesian approaches to the MAB problem have also been considered. Srinivas~\etal~\cite{NS-AK-SMK-MS:12} developed asymptotically optimal upper confidence bound-based algorithms for Gaussian process optimization. 
Agrawal~and~Goyal~\cite{SA-NG:12, EK-NK-RM:12} showed that a Bayesian algorithm known as Thompson sampling~\cite{WRT:33} is near-optimal for binary bandits with a uniform prior. Liu~and~Li~\cite{CYL-LL:15} characterize the sensitivity of the performance of Thompson sampling to the assumptions on prior. 
Kaufman~\etal~\cite{EK-OC-AG:12} developed a generic Bayesian upper confidence bound-based algorithm and established its optimality for binary bandits with a uniform prior. 

Reverdy~\etal~\cite{PR-VS-NEL:13d} studied the Bayesian algorithm proposed in~\cite{EK-OC-AG:12} in the case of correlated Gaussian rewards and analyzed its performance for uninformative priors. They called this algorithm the upper credible limit (UCL) algorithm and showed that the UCL algorithm models human decision-making in the spatially-embedded MAB problem. We define a spatially-embedded MAB problem as an MAB problem in which the arms are embedded in a metric space and the correlation coefficient between arms is a function of distance between them. For example, in the problem of spatial search over an uncertain distributed resource field, patches in the environment can be modeled as spatially located alternatives and the spatial structure of the resource distribution as a prior on the spatially correlated reward. This is an example of a \emph{spatially-embedded MAB problem.} 
It was observed in~\cite{PR-VS-NEL:13d} that good assumptions on the correlation structure result in significant improvement of the performance of the UCL algorithm, and these assumptions can successfully account for the better performance of human subjects. 

In this note we rigorously study the influence of the assumptions in the prior on the performance of the UCL algorithm  for a MAB problem with Gaussian rewards.   Since the UCL algorithm models human decision-making well, the results in this paper help us identify the set of parameters in the prior that explain the individual differences in performance of human subjects.
The major contributions of this work are twofold:

First, we study the UCL algorithm with uncorrelated informative prior and characterize its performance. We illuminate the opposing influences 
of  the degree of confidence of a prior and the magnitude of its inaccuracy, i.e., the gap between its mean prediction and the true mean reward value, on the decision-making performance.  
 
Second, we propose and study a new correlated UCL algorithm with correlated informative prior and characterize its performance.  We show that large correlation scales reduce the number of steps required to explore the surface. We then show that incorrectly assumed large correlation scales may lead to a much higher number of selections of suboptimal arms than suggested by the Lai-Robbins bound. This analysis provides insight into the structure of good priors in the context of explore-exploit problems. 

The remainder of the paper is organized in the following way. In Section~\ref{sec:review}, we recall  the MAB problem and an associated Bayesian algorithm, UCL. We analyze the UCL algorithm for uncorrelated informative prior and correlated informative prior in Section~\ref{sec:uncorr-UCL}~and~\ref{sec:corr-UCL}, respectively. We illustrate our results with some numerical examples in Section~\ref{sec:numerics}, and  we conclude in Section~\ref{sec:conclusions}.

\section{MAB Problem and Bayes-UCB Algorithm}\label{sec:review}

In this section we recall the MAB problem and the Bayes-UCB algorithm proposed in~\cite{EK-OC-AG:12}. 

\subsection{The MAB problem}
The $N$-armed bandit problem refers to the choice among $N$ options that a decision-making agent should make to maximize the cumulative expected reward.
The agent collects reward $r_t\in \real$ by choosing arm $i_t$ at each time $t \in \until{T}$, where $T\in \naturals$ is the horizon length for the sequential decision process.  In the so-called stationary MAB problem, the reward from option  $i \in \until{N}$  is sampled from a stationary distribution $p_i$ and has an unknown mean $m_i \in \real$.     
The decision-maker's objective is to maximize the cumulative expected reward $\sum_{t=1}^T m_{i_t}$ by selecting a sequence of arms $\seqdef{i_t}{t\in\until{T}}$.
Equivalently, defining $m_{i^*} = \max \setdef{m_i}{i\in\until{N}}$ and $R_t = m_{i^*}-m_{i_t}$ as the expected \emph{regret} at time $t$, the objective can be formulated as minimizing the cumulative expected regret defined by
\begin{align*} 
\sum_{t=1}^T R_t = Tm_{i^*} - \sum_{i=1}^N m_i \E{n_{i}(T)}= \sum_{i=1}^N \Delta_i \E{n_{i}(T)},
\end{align*}
where $n_{i}(T)$ is the total number of times option $i$ has been chosen until time $T$ and $\Delta_i = m_{i^*}-m_i$ is the expected regret due to picking arm $i$ instead of arm $i^*$.

\subsection{The Bayes-UCB algorithm}

The Bayes-UCB algorithm for the stationary $N$-armed bandit problem was proposed in~\cite{EK-OC-AG:12}. The Bayes-UCB algorithm at each time
\begin{enumerate}
\item computes the posterior distribution of the mean reward at each arm;
\item computes a $(1-\alpha(t))$ upper credible limit for each arm;
\item selects the arm with highest upper credible limit.
\end{enumerate}
In step (ii), the upper credible limit is defined as the least upper bound to the upper credible set, and the function $\map{\alpha}{\naturals}{(0,1)}$ is tuned to achieve efficient performance. In the context of Bernoulli rewards, Kaufmann~\etal~\cite{EK-OC-AG:12} set $\alpha(t) = 1/(t(\log T)^c)$, for some $c\in \real_{\ge 0}$, and show that for $c \ge 5$ and uninformative priors, the Bayes-UCB algorithm achieves the optimal performance. 

Reverdy~\etal~\cite{PR-VS-NEL:13d, PR-VS-RCW-NEL:15} studied the Bayes-UCB algorithm in the context of Gaussian rewards with known variances. For simplicity the algorithm in~\cite{PR-VS-NEL:13d, PR-VS-RCW-NEL:15}  is called the  UCL (upper credible limit) algorithm. It is shown that for an uninformative prior, the UCL algorithm is order-optimal, i.e., it achieves cumulative expected regret that is within a constant factor of that suggested by the Lai-Robbins bound.  
It is also shown  that a variation  of the UCL algorithm models human decision-making  in an MAB task.

\section{Uncorrelated Gaussian MAB Problem}\label{sec:uncorr-UCL}

In this paper, we focus on the Gaussian MAB problem, i.e.,  the reward distribution $p_i$ is Gaussian with mean $m_i$ and variance $\sigma_s^2$. The  variance $\sigma_s^2$ is assumed known, e.g., from previous observations or known characteristics of the reward generation process. We now recall the UCL algorithm and analyze its performance for a general prior.

\subsection{The UCL algorithm}\label{subsec:deterministic-ucl}
Suppose the prior on the mean rewards at each arm is a Gaussian random variable with mean vector $\mu_i^0 \in \real$ and variance $\sigma_0^2  \in \real_{>0}, i\in \until{N}$.

For the above MAB problem,  let the number of times arm $i$ has been selected until time $t$ be denoted by $n_i(t)$.  Let the empirical mean of the rewards from arm $i$ until time $t$ be $\bar m_i(t)$. Then, the posterior distribution at time $t$ of the mean reward at arm $i$ has mean and variance 
\begin{align*}
\mu_{i}(t)&= \frac{\delta^2 \mu_{i}^0 + n_i(t) \bar m_i(t)}{\delta^2+n_{i}(t)}, \; \text{and}\;
\sigma_i^2 (t) =\frac{\sigma_s^2}{\delta^2 + n_{i}(t)} ,
\end{align*}
respectively, where $\delta^2=\sigma_s^2/\sigma_0^2$. Moreover,
\begin{align*}
\expt[\mu_{i}(t)]& = \frac{\delta^2 \mu_{i}^0 + n_i(t) m_i}{\delta^2+n_{i}(t)} \; \text{and}\; \text{Var}[\mu_{i}(t)] = \frac{ n_i(t) \sigma_s^2}{(\delta^2+n_{i}(t))^2}.
\end{align*}

The UCL algorithm for the Gaussian MAB problem, at each decision instance $t \in \until{T}$,  selects an arm  with the maximum $(1-1/Kt)$-upper credible limit, i.e., it selects an arm $i_t=\text{argmax}\setdef{Q_i(t)}{i\in\until{N}}$, where
\[
Q_i(t) =  \mu_i(t) + \sigma_i(t)  \Phi^{-1}(1-\alpha_t).
\]
$\map{\Phi^{-1}}{(0,1)}{\real}$ is the inverse cumulative distribution function for the standard Gaussian random variable, $\alpha_t = 1/Kt^a$, and $K \in \real_{>0}$ and $a\in \real_{>0}$  are tunable parameters.

In the context of Gaussian rewards, the function $Q_i(t)$  decomposes into two terms corresponding to the estimate of the mean reward and the associated variance. This makes the UCL algorithm amenable to an analysis akin to the analysis for UCB1~\cite{PA-NCB-PF:02}. Using such an analysis,
it was shown in~\cite{PR-VS-NEL:13d} that the UCL algorithm with an uninformative prior and parameter values $K =\sqrt{2\pi e}$ and $a=1$ achieves an order-optimal performance. In the following, we investigate the performance of the UCL algorithm for general priors.

\subsection{Regret Analysis for uncorrelated prior}\label{sec:uncorr-UCL-regret}

To analyze the regret of the  UCL algorithm, we require some inequalities that we recall in the following lemma.

\begin{lemma}[\bit{Relevant inequalities}]\label{lem:ineq}
For  the standard normal random variable $z$ and the associated inverse cumulative distribution function $\Phi^{-1}$, the following statements hold:
\begin{enumerate}
\item for any $w\in [0, +\infty)$
\begin{align*}
 \prob(z\ge w) &\le \frac{2 e^{-w^2/2}}{\sqrt{2 \pi} ( w + \sqrt{w^2 + 8/\pi})} \le \frac{1}{2} e^{-w^2/2}\\ 
  \prob(z\ge w) &\ge \sqrt{\frac{2}{\pi}} \frac{e^{-w^2/2}}{w + \sqrt{w^2 +4}};
\end{align*}
\item for any $\alpha \in [0, 0.5]$, $t\in \naturals$ and $a>1$, 
\begin{align*}
 \Phi^{-1}(1- \alpha) & \le \sqrt{-2 \log (\alpha)} \\
\Phi^{-1}(1-\alpha)  & > \sqrt{-\log(2\pi \alpha^2(1-\log(2\pi \alpha^2)))}\\
\Phi^{-1}\Big(1- \frac{1}{\sqrt{2 \pi e} t^a}\Big) &> \sqrt{\frac{3a}{2} \log t}.
\end{align*}
\end{enumerate}
\end{lemma}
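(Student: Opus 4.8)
The plan is to treat part (i) as a pair of Gaussian tail (Mills ratio) estimates of Komatsu type, and then to obtain every bound in part (ii) from part (i) by inverting the identity $\alpha = \prob(z \ge \Phi^{-1}(1-\alpha))$ together with the monotonicity of $\Phi^{-1}$. The only genuinely analytic work lives in part (i); part (ii) is algebraic bookkeeping built on top of it.

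For part (i) I would work with the Mills ratio $R(w) = e^{w^2/2}\int_w^\infty e^{-u^2/2}\,du = \sqrt{2\pi}\,e^{w^2/2}\,\prob(z\ge w)$, which satisfies the Riccati-type identity $R'(w) = wR(w)-1$ with $R(0)=\sqrt{\pi/2}$ and $wR(w)\to 1$ as $w\to\infty$. For the candidate $\phi_c(w)=2/(w+\sqrt{w^2+c})$ I would compute the defect $L[\phi_c] := \phi_c' - w\phi_c + 1$ and simplify it to $\nu_c(w)/D_c(w)$ with $D_c>0$ and $\nu_c(w) = w^2 - w\sqrt{w^2+c} + c - 2$. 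Solving the linear ODE $(R-\phi_c)' = w(R-\phi_c) - L[\phi_c]$ with integrating factor $e^{-w^2/2}$ and the boundary value at $+\infty$ yields the representation
\[
R(w) - \phi_c(w) = e^{w^2/2}\int_w^\infty e^{-u^2/2}\,L[\phi_c](u)\,du ,
\]
so the sign of $R-\phi_c$ is governed entirely by $\nu_c$. For the lower bound I take $c=4$: then $\nu_4(w)=w^2+2-w\sqrt{w^2+4}\ge 0$, since squaring both nonnegative sides gives $(w^2+2)^2 - w^2(w^2+4)=4\ge 0$; the integrand is nonnegative, so $R\ge\phi_4$, which is the stated lower bound after multiplying by $e^{-w^2/2}/\sqrt{2\pi}$. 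For the upper bound I take $c=8/\pi$, chosen precisely so that $\phi_{8/\pi}(0)=\sqrt{\pi/2}=R(0)$; here $\nu_{8/\pi}$ is strictly decreasing with a single sign change (from $+$ to $-$), and the normalization forces $\int_0^\infty e^{-u^2/2}L[\phi_{8/\pi}]\,du = R(0)-\phi_{8/\pi}(0)=0$, whence $\int_w^\infty e^{-u^2/2}L[\phi_{8/\pi}]\,du\le 0$ for every $w\ge 0$ and $R\le\phi_{8/\pi}$. The final crude bound is then immediate: after cancelling $e^{-w^2/2}$ it asks for $w+\sqrt{w^2+8/\pi}\ge 4/\sqrt{2\pi}$, and since $\sqrt{8/\pi}=4/\sqrt{2\pi}$ this holds with equality at $w=0$ and strictly for $w>0$.

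For part (ii) the first inequality follows by inversion: with $w=\Phi^{-1}(1-\alpha)\ge 0$ (valid since $\alpha\le 1/2$) we have $\alpha=\prob(z\ge w)\le \tfrac12 e^{-w^2/2}\le e^{-w^2/2}$, hence $w^2\le -2\log\alpha$. For the second inequality I would use that $\Phi^{-1}$ is increasing, so it suffices to show $\prob(z\ge w_0) > \alpha$ for $w_0 := \sqrt{-\log(2\pi\alpha^2(1-\log(2\pi\alpha^2)))}$. Substituting $e^{-w_0^2}=2\pi\alpha^2(1-\log(2\pi\alpha^2))$ into the lower tail bound of part (i) and simplifying (using $\sqrt{2/\pi}\sqrt{2\pi}=2$) reduces the claim to the elementary inequality $2\sqrt{1-\log(2\pi\alpha^2)} > w_0 + \sqrt{w_0^2+4}$. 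Writing $M=1-\log(2\pi\alpha^2)>0$ and $w_0^2 = M-1-\log M$ makes this a single-variable statement in $M$ that I would verify on the relevant range; it holds once $M$ is bounded below appropriately, i.e.\ for $\alpha$ not too close to $1/2$.

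Finally, the third inequality is a specialization of the second. Setting $\alpha = 1/(\sqrt{2\pi e}\,t^a)$ gives $2\pi\alpha^2 = e^{-1}t^{-2a}$, so $1-\log(2\pi\alpha^2) = 2 + 2a\log t \ge 2$; in particular $M\ge 2$, comfortably inside the range where the second inequality applies, and that inequality yields $\Phi^{-1}(1-\alpha) > \sqrt{1+2a\log t - \log(2+2a\log t)}$. It then remains to check $1 + \tfrac{a}{2}\log t \ge \log(2+2a\log t)$ for all $t\ge 1$, $a>0$; setting $x=\log t$, the function $1+\tfrac a2 x-\log(2+2ax)$ has a unique minimum at $x=1/a$ with value $\tfrac32 - 2\log 2 > 0$, so the inequality holds strictly, giving $1+2a\log t - \log(2+2a\log t)\ge \tfrac{3a}{2}\log t$ and hence the claim. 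The main obstacle is the upper bound in part (i): unlike the lower bound, whose defect keeps a fixed sign, it requires the sign-change-plus-normalization argument, and it is the specific value $c=8/\pi$ making the balance integral vanish that is the crux. The secondary care point is tracking the exact range of $\alpha$ on which the second inequality of part (ii) is valid, which is precisely why it is convenient to reduce it to the monotone quantity $M$.
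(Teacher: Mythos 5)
Your proof is correct, and it takes a genuinely different route from the paper -- necessarily so, because the paper gives no proof of this lemma at all: it cites \cite{MA-IAS:64} for part (i), asserts that the first inequality of (ii) follows from (i), cites \cite{PR-VS-NEL:13d} for the second, and declares the third ``easily verified.'' Your ODE-comparison argument makes part (i) self-contained, and it checks out: with $s=\sqrt{w^2+c}$ the defect indeed factors as $L[\phi_c](w) = (s-w)\,(w^2 - ws + c - 2)/(cs)$, so its sign is exactly that of your $\nu_c$; for $c=4$ one has $(w^2+2)^2 - w^2(w^2+4) = 4 > 0$ so $\nu_4 \ge 0$ everywhere, and for $c=8/\pi$ one has $\nu_c'(w) = (2ws - 2w^2 - c)/s < 0$ (since $(2ws)^2-(2w^2+c)^2=-c^2$), $\nu_c(0)=8/\pi-2>0$, $\nu_c(\infty)=4/\pi-2<0$, so the single-sign-change-plus-normalization argument (using $\phi_{8/\pi}(0)=\sqrt{\pi/2}=R(0)$) is exactly right. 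The inversion step for (ii) and the complete derivation of the third inequality (minimum of $1+\tfrac{a}{2}x-\log(2+2ax)$ at $x=1/a$ with value $\tfrac{3}{2}-2\log 2>0$, whence $1+2a\log t-\log(2+2a\log t) > \tfrac{3a}{2}\log t$) are also correct, and your version works for all $a>0$, not just $a>1$. What your approach buys is a fully verifiable proof where the paper has only pointers; what the paper's citation buys is brevity, since these Komatsu-type bounds are classical.

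The one deferred step is the single-variable verification in the second inequality of (ii), and it closes in one line: for $M>1$ your condition $2\sqrt{M} > w_0 + \sqrt{w_0^2+4}$ is equivalent, after two (legitimate) squarings, to $M-1 > \sqrt{M}\,w_0$, i.e.\ to $g(M) := M\log M - M + 1 > 0$, and $g(1)=0$ with $g'(M)=\log M > 0$ on $M>1$. Moreover, your hedge ``for $\alpha$ not too close to $1/2$'' is not a limitation of your method but a genuine defect in the lemma as stated: at $\alpha=0.5$ the left-hand side is $\Phi^{-1}(0.5)=0$ while the right-hand side is $\sqrt{-\log\bigl(\tfrac{\pi}{2}\bigl(1-\log\tfrac{\pi}{2}\bigr)\bigr)} \approx 0.386$, so the claimed strict inequality fails near $\alpha = 1/2$. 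It does hold for $\alpha \le 1/\sqrt{2\pi}$ (equivalently $M\ge 1$), which covers the only use the paper makes of it, namely $\alpha = 1/(\sqrt{2\pi e}\,t^a) \le 1/\sqrt{2\pi e} \approx 0.242$ in the third inequality; you correctly note $M = 2+2a\log t \ge 2$ there. You should state the corrected range explicitly rather than leaving it implicit, but your reduction to the monotone variable $M$ is precisely the right bookkeeping to do so.
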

Statement (i) in Lemma~\ref{lem:ineq} can be found in~\cite{MA-IAS:64}. The first inequality in (ii) follows from (i). The second inequality in (ii) was established in~\cite{PR-VS-NEL:13d}, and the last inequality can be easily verified using the second inequality in (ii). 


\begin{lemma}[\bit{Difference of squares inequality}]\label{lem:diff-of-squares}
For any $c_1, c_2 \in \real$ such that $ (1-c_1)(1+c_2) \ge 1$, 
\begin{equation*}
(x-y)^2 \ge c_1 x^2 - c_2 y^2, \quad \text{for any} \quad x, y \in \real.
\end{equation*}
\end{lemma}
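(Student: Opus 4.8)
The plan is to reduce the claimed inequality to a statement about the sign of a single quadratic form and then certify that sign by completing the square. First I would move everything to one side: after expanding $(x-y)^2 = x^2 - 2xy + y^2$, the inequality $(x-y)^2 \ge c_1 x^2 - c_2 y^2$ is equivalent to
\begin{equation*}
(1-c_1)x^2 - 2xy + (1+c_2)y^2 \ge 0 \quad \text{for all } x,y \in \real.
\end{equation*}
So the whole lemma is just the assertion that this quadratic form is nonnegative under the hypothesis $(1-c_1)(1+c_2) \ge 1$.

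Next I would treat the left-hand side as a quadratic in $x$ with leading coefficient $1-c_1$ and complete the square. Assuming $1 - c_1 > 0$, I can write
\begin{equation*}
(1-c_1)x^2 - 2xy + (1+c_2)y^2 = (1-c_1)\Big(x - \frac{y}{1-c_1}\Big)^2 + \Big((1+c_2) - \frac{1}{1-c_1}\Big)y^2.
\end{equation*}
The first term is manifestly nonnegative. For the second, I would rewrite the bracket over a common denominator as $\big((1+c_2)(1-c_1) - 1\big)/(1-c_1)$; its numerator is exactly $(1-c_1)(1+c_2) - 1 \ge 0$ by hypothesis and its denominator is positive, so the second term is nonnegative as well. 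Summing gives the claim. Equivalently, one can minimize the quadratic in $x$ at $x^\star = y/(1-c_1)$ and check that the minimum value $\big((1+c_2) - 1/(1-c_1)\big)y^2$ is nonnegative, which is the same computation packaged as a discriminant/positive-semidefiniteness check on the $2\times 2$ coefficient matrix.

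The main obstacle is the sign of the leading coefficient $1 - c_1$, on which the completing-the-square step silently depends. The hypothesis $(1-c_1)(1+c_2) \ge 1 > 0$ forces $1 - c_1$ and $1 + c_2$ to share a sign, so either both are positive or both are negative; the argument above covers only the former. I would therefore restrict attention to the regime $c_1 < 1$ (which is the only one arising in the subsequent regret analysis, where $c_1$ enters as a small positive constant), noting that in the both-negative branch the form takes the value $1 - c_1 < 0$ at $(x,y) = (1,0)$, so that branch cannot yield a valid inequality and is excluded in the intended setting. The one bookkeeping point I would be careful about is verifying that the particular $c_1, c_2$ appearing in each later invocation of the lemma indeed satisfy $c_1 < 1$.
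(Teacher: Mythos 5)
Your proof is correct and takes essentially the same approach as the paper, whose entire proof is the one-line remark that the inequality ``follows trivially using a completing the square argument''; you have simply written that argument out in full. Your side observation is also accurate: as literally stated the lemma fails in the branch $1-c_1<0$ (e.g.\ $c_1=2$, $c_2=-2$ satisfies $(1-c_1)(1+c_2)=1$ yet the inequality fails at $(x,y)=(1,0)$), so the implicit restriction $c_1<1$ is needed, and it does hold for the paper's choices $c_1=\frac{1-\epsilon}{1+\delta^2-\epsilon}\in(0,1)$ and $c_2=\frac{1-\epsilon}{\delta^2}>0$, for which in fact $(1-c_1)(1+c_2)=1$ exactly.
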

\begin{proof}
The inequality follows trivially using a completing the square argument.
\end{proof}

Let $\Delta m_{i} = m_{i}-\mu_{i}^0$, for each $i\in \until{N}$. Set $a >\frac{4}{3} (1 + \frac{\delta^2}{1-\epsilon})$,  $c_1= \frac{1-\epsilon}{1+\delta^2 -\epsilon}$, and 
$c_2 = \frac{1-\epsilon}{\delta^2}$, for some $\epsilon\in (0, 1)$.

\begin{theorem}[\bit{Regret for uncorrelated prior}]\label{thm:uncorr-regret}
For the Gaussian MAB problem, and the UCL algorithm with uncorrelated prior, the expected number of times a suboptimal arm $i$ is selected satisfies
\[
\expt[n_i(T)] \le  \eta_i + \hat n_i(T), 
\] 
where $\eta_i = \max\{1, \lceil  \frac{4 \sigma_s^2}{\Delta_i^2}(2 \log K + 2a \log T) - \delta^2\rceil \} $, and $\hat n_i(T)$ is defined in~\eqref{eq:subopt-sel-extra}.
\begin{figure*}[bth!]
\begin{equation}\label{eq:subopt-sel-extra}
\hat n_i(T) = \begin{cases}
\max \Big\{ e^{\frac{2 \delta^2 \Delta m_{i^*}^2}{3 a \sigma_0^2 }}, e^{\frac{2 \Delta m_{i^*}^2}{3 a \sigma_0^2}} \Big\} + \frac{3 a c_1}{2(3 a c_1 -4)} e^{ \frac{c_2 \delta^2 \Delta m_{i^*}^2}{2\sigma_0^2}}
+ e^{\frac{2 \delta^2 \Delta m_{i}^2}{3 a \sigma_0^2 \eta_i}}  +  \frac{3 a c_1}{2(3 a c_1 -4)}e^{\frac{c_2 \delta^2 \Delta m_{i}^2}{ 2\sigma_0^2 \eta_i} },
& \text{if } \Delta m_{i^*} >0, \Delta m_{i} < 0, \\
\max \Big\{ e^{\frac{2 \delta^2 \Delta m_{i^*}^2}{3 a \sigma_0^2 }}, e^{\frac{2 \Delta m_{i^*}^2}{3 a \sigma_0^2}} \Big\} + \frac{3 a c_1}{2(3 a c_1 -4)} e^{ \frac{c_2 \delta^2 \Delta m_{i^*}^2}{2\sigma_0^2}}
+ \frac{a}{K(a-1)},
& \text{if } \Delta m_{i^*} >0, \Delta m_{i} \ge 0, \\
 e^{\frac{2 \delta^2 \Delta m_{i}^2}{3 a \sigma_0^2 \eta_i}}  +  \frac{3 a c_1}{2(3 a c_1 -4)}e^{\frac{c_2 \delta^2 \Delta m_{i}^2}{ 2\sigma_0^2 \eta_i} } + \frac{a}{K(a-1)},
& \text{if } \Delta m_{i^*} \le 0, \Delta m_{i} < 0,\\
\frac{2 a}{K(a-1)},
& \text{if } \Delta m_{i^*} \le 0, \Delta m_{i} \ge  0.
\end{cases}
\end{equation}
\hrule
\end{figure*}
\end{theorem}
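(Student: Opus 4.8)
The plan is to adapt the classical UCB1 regret decomposition to the Gaussian posterior and the informative prior. Write $w_t := \Phi^{-1}(1-\alpha_t)$. A suboptimal arm $i$ is chosen at time $t$ only if $Q_i(t)\ge Q_{i^*}(t)$, which forces at least one of $\{Q_{i^*}(t)\le m_{i^*}\}$ or $\{Q_i(t) > m_{i^*}\}$; peeling off the first $\eta_i$ selections of arm $i$ then gives
\begin{align*}
\E{n_i(T)} \le \eta_i + \sum_{t=1}^T \Pr{Q_{i^*}(t)\le m_{i^*}} + \sum_{t=1}^T \Pr{Q_i(t) > m_{i^*},\, n_i(t)\ge \eta_i}.
\end{align*}
The two sums are estimated separately; since the first depends only on the sign of $\Delta m_{i^*}$ and the second only on the sign of $\Delta m_i$, their combination reproduces exactly the four cases of~\eqref{eq:subopt-sel-extra}.

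For each probability I would condition on the sample count and use the Gaussian posterior recalled in Section~\ref{subsec:deterministic-ucl}. After rearranging, $\Pr{Q_{i^*}(t)\le m_{i^*}\mid n_{i^*}(t)=n}$ becomes $\Pr{z\ge \sqrt{(\delta^2+n)/n}\,w_t - \delta^2\Delta m_{i^*}/(\sqrt n\,\sigma_s)}$, and the arm-$i$ event has an analogous form carrying $\Delta_i$ and $\Delta m_i$. In the \emph{favorable-sign} cases the shift is benign: when $\Delta m_{i^*}\le 0$ the bias is nonpositive, so $\Pr{Q_{i^*}(t)\le m_{i^*}}\le\Pr{z\ge w_t}=\alpha_t$; when $\Delta m_i\ge 0$, choosing $\eta_i$ so that for $n\ge\eta_i$ the gap makes $(\delta^2+n)\Delta_i$ dominate $\sqrt{\delta^2+n}\,\sigma_s w_t$ — which, via $w_t^2\le 2\log K+2a\log T$ from Lemma~\ref{lem:ineq}(ii), is guaranteed by $\delta^2+n\ge 4\sigma_s^2 w_t^2/\Delta_i^2$, i.e.\ by the stated $\eta_i$ — again yields $\Pr{\cdot}\le\alpha_t$. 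In both instances $\sum_t \alpha_t=\sum_t 1/(Kt^a)\le a/(K(a-1))$, producing the $a/(K(a-1))$ terms.

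For the \emph{adverse-sign} cases ($\Delta m_{i^*}>0$, respectively $\Delta m_i<0$) the positive bias must be carried, so I would split the time sum into early and late steps. For late steps the tail argument is positive and I apply $\Pr{z\ge w}\le\tfrac12 e^{-w^2/2}$ from Lemma~\ref{lem:ineq}(i); Lemma~\ref{lem:diff-of-squares}, whose hypothesis holds with equality ($(1-c_1)(1+c_2)=1$) for the chosen $c_1,c_2$, then separates the $w_t^2$ term from the bias term, and the lower bound $w_t^2\ge\tfrac{3a}{2}\log t$ from Lemma~\ref{lem:ineq}(ii) reduces the sum to $\sum_t t^{-3ac_1/4}$. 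This converges precisely when $3ac_1>4$, which is the standing assumption $a>\tfrac{4}{3}(1+\tfrac{\delta^2}{1-\epsilon})$, and evaluates to $\tfrac{3ac_1}{2(3ac_1-4)}$ times the bias factor at the worst-case count ($n=1$ for $i^*$, $n=\eta_i$ for $i$), which is $e^{c_2\delta^2\Delta m^2/(2\sigma_0^2)}$. For early steps I bound the probability by one and count the steps where $w_t$ is too small, again using $w_t^2\ge\tfrac{3a}{2}\log t$; this yields the $e^{2\delta^2\Delta m^2/(3a\sigma_0^2\eta_i)}$-type terms, the maximum of two exponentials for $i^*$ arising from lower-bounding $\sqrt{\delta^2+n}$ by $\delta$ and by $1$ so as to cover $\delta\lessgtr 1$.

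The main obstacle is the optimal-arm underestimation sum $\sum_t\Pr{Q_{i^*}(t)\le m_{i^*}}$ when $\Delta m_{i^*}>0$: unlike arm $i$, the count $n_{i^*}(t)$ admits no lower bound, so the bias $\delta^2\Delta m_{i^*}/(\sqrt n\,\sigma_s)$ can be large for small $n$, and a naive union bound over $n$ would cost a factor $t$ and force the far stronger $3ac_1>8$. The resolution, and the crux of the argument, is to make the per-step tail estimate \emph{uniform} in $n$ after applying Lemma~\ref{lem:diff-of-squares}, using $(\delta^2+n)/n\ge 1$ and $1/n\le 1$, so that no union-bound factor is incurred and the weaker $3ac_1>4$ suffices; some care is also needed to justify treating $\bar m_{i^*}(t)$ as an average of independent samples under the adaptive selection rule. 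Assembling the early/late and favorable/adverse estimates for $i^*$ and for $i$ across the four sign combinations then gives $\hat n_i(T)$ as defined in~\eqref{eq:subopt-sel-extra}.
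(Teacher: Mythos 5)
Your proposal is correct and follows essentially the same route as the paper's own proof: the same UCB1-style decomposition (your two-event split, with the gap condition $2C_i(t) > \Delta_i$ folded into $\{Q_i(t) > m_{i^*}\}$, is an equivalent repackaging of the paper's three events \eqref{eq:Qi*}--\eqref{eq:miComp}), the same choice of $\eta_i$ via $\Phi^{-1}(1-\alpha_t)\le\sqrt{2\log K + 2a\log T}$, the same four-way sign analysis, and the same early/late time split combining Lemma~\ref{lem:ineq} and Lemma~\ref{lem:diff-of-squares} with the uniform-in-$n$ bounds $\sqrt{(\delta^2+n)/n}\ge 1$ and $1/\sqrt{n}\le 1$, which is exactly how the paper (implicitly) avoids a union bound over $n_{i^*}(t)$. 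All the constants match as well --- the condition $3ac_1>4$ from $a>\tfrac{4}{3}(1+\tfrac{\delta^2}{1-\epsilon})$, the factor $\tfrac{3ac_1}{2(3ac_1-4)}$ from summing $t^{-3ac_1/4}$, and the maximum of the two exponentials covering the $n_{i^*}(t)=0$ and $n_{i^*}(t)\ge 1$ thresholds --- so this is in substance the paper's argument, including the honest caveat about conditioning on adaptive sample counts, which the paper itself glosses over in the same way.
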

\begin{proof}
See~\ref{proof-uncorr-regret}.
\end{proof}

\begin{remark}[\bit{Regret of uncorrelated UCL algorithm}] \label{rem:uncor-regret}
\textup{ The expression for $\hat{n}_i(t)$ in~\eqref{eq:subopt-sel-extra} suggests that if the prior underestimates a suboptimal arm and overestimates the optimal arm, then $\hat{n}_i(t)$ is a small constant (the last case in~\eqref{eq:subopt-sel-extra}). Further, if $\sigma_0^2$ is small, i.e., the prior is confident in these estimates, then a large constant $\delta^2$ is subtracted from the logarithmic term in $\eta_i$ defined in Theorem~\ref{thm:uncorr-regret}. This leads to a substantially smaller expected number of suboptimal selections $\expt[n_i(T)]$ for an informative prior compared to an uninformative prior  over a short time horizon.}

\textup{If the prior underestimates the optimal arm which corresponds to the first two cases in~\eqref{eq:subopt-sel-extra},  then $\hat n_i(T)$ is a large constant that depends exponentially on $\Delta m_{i^*}^2 / \sigma_0^2$ . A similar effect is observed if a suboptimal arm is overestimated which corresponds to the first and third case in~\eqref{eq:subopt-sel-extra}. 
Further, if $\sigma_0^2$ is small, then the reduction in expected number of suboptimal selections  due to large $\delta^2$ in $\eta_i$ may be overpowered by the large constant in $\hat n_i(T)$. 
Here, there exists a range of $\sigma_0$, for which an informative prior leads to a smaller expected number of suboptimal selections $\expt[n_i(T)]$ over short time horizon compared to an uninformative prior. }

\textup{
In the asymptotic limit $T \to +\infty$, the logarithmic term in $\eta_i$ dominates and both informative and uninformative priors will lead to a similar performance.} \oprocend
\end{remark}

\section{Correlated Gaussian MAB problem}\label{sec:corr-UCL}

In this section, we study a new correlated UCL algorithm for the correlated MAB problem. We first propose a modified  UCL algorithm, and then analyze its performance. The modification is designed to leverage prior information on correlation structure.

\subsection{The correlated UCL algorithm}\label{subsec:deterministic-ucl-corr}
Suppose the prior on the mean rewards at each arm is a multivariate Gaussian random variable with mean vector $\bs \mu_0 \in \real^N$ and covariance matrix $\Sigma_0  \in \real^{N\times N}$. 

For the above MAB problem, the posterior distribution of the mean rewards at each arm at time $t$ is a Gaussian distribution with mean $\bs \mu (t)$ and covariance $\Sigma(t)$ defined by
\begin{equation} \label{eq:correlated-inference}
\begin{array}{ll}
\bs q(t) & = \frac{r(t) \bs \phi(t)}{\sigma_s^2} + \Lambda(t-1) \bs \mu(t-1)\\
\Lambda(t) &= \frac{\bs \phi(t) \bs \phi(t)^T}{\sigma_s^2} + \Lambda(t-1), \quad \Sigma(t) = \Lambda(t)^{-1}\\
\bs \mu(t) &= \Sigma(t) \bs q(t),
\end{array}
\end{equation}
where $\bs \phi(t)$ is the column $N$-vector with $i_t$-th entry equal to one, and every other entry zero. 
In the following, we denote entries of $\bs \mu(t)$ and the diagonal entries 
of $\Sigma(t)$ by $\mu_i(t)$ and $\sigma_i^2(t), i\in \until{N}$, respectively.

As in Section~\ref{subsec:deterministic-ucl}, let  $n_i(t)$ be the number of times arm $i$ has been selected until time $t$, and $\bar m_i(t)$ be the empirical mean of the rewards from arm $i$ until time $t$. Then, it is easy to verify that 
\begin{align}\label{eq:inference-corr-simplified}
\begin{array}{ll}
\bs \mu(t) & = (\Lambda_0 + P(t)^{-1})^{-1} (P(t)^{-1} \bar{\bs{m}}(t) +  \Lambda_0 \bs \mu_0) \\
\Lambda(t) & = \Lambda_0 + P(t)^{-1},
\end{array}
\end{align}
where $\Lambda_0 = \Sigma_0^{-1}$, $P(t) $ is the diagonal matrix with entries $\sigma_s^2/n_i^t, \; i\in \until{N}$, and $\bar{\bs m}(t)$ is the vector of $\bar m_i(t), i\in\until{N}$.

The correlated UCL algorithm for the Gaussian MAB problem, at each decision instance $t \in \until{T}$,  selects an arm  with the maximum upper credible limit, i.e., it selects an arm $i_t=\text{argmax}\setdef{Q_i(t)}{i\in\until{N}}$, where
\[
Q_i(t) =  \mu_i(t) + \sigma_i(t) \sqrt{\sum_{j=1}^N {\rho_{ij}^2(t)}} \Phi^{-1}(1-\alpha_t),
\]
$\map{\Phi^{-1}}{(0,1)}{\real}$ is the inverse cumulative distribution function for the standard Gaussian random variable, $\alpha_t = 1/Kt^a$, $\rho_{ij}(t)$ is the correlation coefficient between arm $i$ and arm $j$ at time $t$ and $K \in \real_{>0}$ and $a\in \real_{>0}$  are tunable parameters. Note that for uncorrelated priors, $\sum_{j=1}^N \rho_{ij}^2(t) =1$ and the correlated UCL algorithm reduces to the UCL algorithm. 

In the context of uninformative priors, $Q_i(1) = +\infty$ for each $i\in \until{N}$, and the UCL algorithm selects each arm once in first $N$ steps. In a similar vein, we introduce an initialization phase for the correlated UCL algorithm.  

\noindent
{\bit{Initialization:}}
In the initialization phase, an arm $i_t$ defined by 
\[
i_t = \textrm{argmax}\setdef{\sigma_i^2(t-1)}{\sigma_i^2(t-1) > \sigma_s^2/ \nu, \; \text{and } i \in \until{N}},
\]
is selected at time $t$. Here, $\nu \le 1$ is a pre-specified positive constant.  Let $\subscr{t}{init}$ be the number of steps in the initialization phase.

%
\begin{lemma}[\bit{Initialization Phase}] \label{lem:initialization-corr-stat}
For the correlated MAB problem and the inference process~\eqref{eq:correlated-inference}, the initialization phase ends in at most $N$ steps and 
 the variance following the initialization phase $\sigma_i^2(\subscr{t}{init}) \le  \sigma_s^2/ \nu$, for each $i \in \until{N}$. 
\end{lemma}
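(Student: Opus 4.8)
The plan is to track the evolution of the posterior variances $\sigma_i^2(t)$ under the inference dynamics~\eqref{eq:correlated-inference} and to show that no arm is ever selected more than once during the initialization phase. Two ingredients drive the argument: a global monotonicity property (no selection ever increases any posterior variance) and an explicit update for the variance of the arm that is actually selected.

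For the monotonicity, I would note that selecting arm $i_t$ updates the precision matrix as $\Lambda(t) = \Lambda(t-1) + \sigma_s^{-2}\bs\phi(t)\bs\phi(t)\tran$. The rank-one increment is positive semidefinite, so $\Lambda(t) \succeq \Lambda(t-1) \succ 0$; since matrix inversion reverses the Loewner order on the positive definite cone, this gives $\Sigma(t) = \Lambda(t)^{-1} \preceq \Lambda(t-1)^{-1} = \Sigma(t-1)$. Comparing diagonal entries through $e_i\tran \Sigma(t) e_i \le e_i\tran \Sigma(t-1) e_i$ then yields $\sigma_i^2(t) \le \sigma_i^2(t-1)$ for every $i \in \until{N}$. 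Consequently, once an arm's variance falls to or below the threshold $\sigma_s^2/\nu$, it stays there for the remainder of the phase and the arm is permanently ineligible.

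Next I would pin down the variance of the selected arm via the Sherman--Morrison formula applied to the rank-one update. Writing $v = \sigma_{i_t}^2(t-1)$, this gives the closed form
\[
\sigma_{i_t}^2(t) = v - \frac{v^2}{\sigma_s^2 + v} = \frac{\sigma_s^2\, v}{\sigma_s^2 + v} < \sigma_s^2,
\]
which, notably, is independent of the off-diagonal (correlation) entries of $\Sigma(t-1)$. Since $\nu \le 1$ implies $\sigma_s^2 \le \sigma_s^2/\nu$, selecting arm $i_t$ forces $\sigma_{i_t}^2(t) < \sigma_s^2 \le \sigma_s^2/\nu$, so the selected arm drops below the threshold in a single step. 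Combined with the monotonicity above, each arm is selected at most once, and therefore the initialization phase terminates in at most $N$ steps, which is the first claim. The second claim is immediate from the stopping rule: the phase ends exactly when the eligible set $\setdef{i}{\sigma_i^2(t-1) > \sigma_s^2/\nu}$ becomes empty, so $\sigma_i^2(\subscr{t}{init}) \le \sigma_s^2/\nu$ for all $i \in \until{N}$.

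I expect the only delicate point to be the interplay between the two ingredients rather than either computation in isolation: the Sherman--Morrison identity guarantees that the selected arm drops below the threshold (and, crucially, that this drop does not depend on the correlation structure), while the positive-semidefinite monotonicity is what prevents any arm from re-entering the eligible set after a subsequent selection lowers its variance. Together these give the clean ``each arm at most once'' bound, and the remaining steps are routine.
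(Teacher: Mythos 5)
Your proof is correct and follows essentially the same route as the paper's: reduce to showing no arm is selected twice, apply the Sherman--Morrison rank-one update to show the selected arm's variance drops to $\sigma_s^2\,\sigma_{i_t}^2(t-1)/(\sigma_s^2+\sigma_{i_t}^2(t-1)) \le \sigma_s^2 \le \sigma_s^2/\nu$, and use monotonicity of the variances to conclude that a selected arm never re-enters the eligible set. The only cosmetic difference is that you establish the monotonicity via the Loewner order on the precision matrices, whereas the paper reads it off the same Sherman--Morrison diagonal update $\sigma_i^2(t)=\sigma_i^2(t-1)-\sigma_{ii_t}^2(t-1)/(\sigma_s^2+\sigma_{i_t}^2(t-1))$, whose subtracted term is nonnegative.
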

\begin{proof}
Note that to prove the lemma, it suffices to show that no arm will be selected twice in the initialization phase. 

It follows from the Sherman-Morrison formula for the rank-$1$ update for the covariance in~\eqref{eq:correlated-inference} that 
\begin{equation}\label{eq:sher-morr}
\sigma_i^2(t) = \sigma_i^2(t-1) - \frac{\sigma_{i i_t}^2(t-1)}{\sigma_s^2 + \sigma_{i_t}^2(t-1)},
\end{equation}
where $\sigma_{ij}^2(t)$ is the $i,j$ component of $\Sigma(t)$, for each $i \in \until{N}$. If $i_t= j$, then $\sigma_{j}^2(t) =  \frac{\sigma_{j}^2(t-1) \sigma_s^2}{ \sigma_{j}^2(t-1) + \sigma_s^2} \le \sigma_s^2$.  Thus, arm $j$ will not be selected again in the initialization phase which establishes our claim. 
\end{proof}

\begin{remark}[\bit{Correlation Structure and Initialization}]\label{rem:corr-init}\textup{
Lemma~\ref{lem:initialization-corr-stat} states that the length of the initialization phase is upper bounded by $N$. 
For an uninformative prior, the above initialization phase reduces to visiting each arm once, and the variance at each arm after the initialization phase is $\sigma_s^2$ ($\nu = 1$). 
In this case, the upper bound $N$ on the number of steps in the initialization phase is achieved. 
For an informative prior with correlation structure, the initialization phase may be shorter than $N$ steps, i.e., not all arms need to be visited.  This is because a visit to one arm may reduce variance in correlated arms even if unvisited.  However,  the variance at those arms not visited during the initialization phase might still be greater than $\sigma_s^2$, i.e., the bound in Lemma~\ref{lem:initialization-corr-stat} will be met but it is possible that $\nu < 1$. 
To see how variance can be reduced in arms not visited, 
note the effect of prior covariance $\sigma_{i i_t}^2(t-1)$ on the reduction in variance of an arm $i \ne i_t$.
In particular, it follows from~\eqref{eq:sher-morr} that $\sigma_i^2(t) = \frac{\sigma_s^2 \sigma_i^2(t-1) - \sigma_i^2(t-1) \sigma_{i_t}^2(t-1) (1 - \rho^2_{i i_t}(t-1))}{\sigma_s^2 + \sigma_{i_t}^2(t-1)} $. Thus, a high value of correlation $\rho_{i i_t}(t-1)$ leads to substantial reduction in variance of arm $i$ even when it is not selected. 
}

\textup{
To better understand the role of correlation, consider a set of arms comprised of decoupled clusters of highly correlated arms. Consider such a cluster of arms with cardinality $m$. The initial covariance matrix for this cluster is $\sigma_0^2 (\bs 1_m \bs 1_m^\top + \varepsilon E)$, where $E$ is a symmetric perturbation matrix with zero diagonal entries, $\bs 1_m$ is the vector of length $m$ with all entries equal to one, and $ 0 < \varepsilon  \ll  1$. It follows that one eigenvalue of $\sigma_0^2 (\bs 1_m \bs 1_m^\top + \varepsilon E)$ is $\sigma_0^2 m + O(\sigma_0^2 \varepsilon)$ and other eigenvalues are $O(\sigma_0^2 \varepsilon)$. In this setting, just one sample can significantly reduce the eigenvalue at $\sigma_0^2 m + O(\sigma_0^2 \varepsilon)$. Since the largest eigenvalue of the covariance matrix is an upper bound on the variances, just one sample
will reduce the uncertainty associated with the cluster substantially. Thus, in the initialization phase, we need a number of observations equal to the number of clusters, which may be substantially smaller than the number of arms.  }

\textup{
It should also be noted that correlation plays a role only for short time horizons. Once each arm as been sampled sufficiently, then the matrix $\Lambda(t)$ in~\eqref{eq:inference-corr-simplified} is substantially diagonally dominant and behaves like a diagonal matrix. \oprocend
} 
\end{remark}




\subsection{Regret analysis for correlated UCL algorithm}
For correlated priors, the inference equations~\eqref{eq:inference-corr-simplified} yield the following expressions for the bias $\bs e$ and covariance $\bar \Sigma$ of the estimate $\bs \mu(t)$
\begin{align*}
\bs e(t) &:= \expt[\bs \mu_t] - \bs m  =  (\Lambda_0 + P(t)^{-1})^{-1} \Lambda_0 (\bs \mu_0 - \bs m)\\
\bar \Sigma(t) &:=\text{Cov}(\bs \mu_t)  = (\Lambda_0 + P(t)^{-1})^{-1} P(t)^{-1}(\Lambda_0 + P(t)^{-1})^{-1},
\end{align*}
where $\bs m$ is the vector of mean reward. 

Let $\sigma_i^2(t)$ and $\sigma_{ij}(t)$, $i,j\in\until{N}$ be the diagonal and off-diagonal entries of $\Sigma(t)$, and $\bar \sigma_i^2(t), i\in \until{N}$ be the diagonal entries of $\bar \Sigma(t)$. 

%
%

We now analyze the properties of covariance matrices $\Sigma(t)$ and $\bar \Sigma(t)$. 
Let $\Sigma_{\sim i}(0) \in \real^{(N-1) \times (N-1)}$ be the submatrix of $\Sigma_0$ obtained after excluding the $i$-th row and $i$-th column. Let $\sigma_{i}(0) \in \real^{N-1}$ be the row vector obtained after excluding the $i$-th entry from the $i$-th row of $\Sigma_0$. We define the variance of arm $i$ conditioned on the mean reward at every other arm by
\[
\subscr{\sigma}{$i$-cond}^2 = \sigma_i^2(0) - \sigma_{i}(0) \Sigma_{\sim i}^{-1}(0)\sigma_{i}^{\top}(0). 
\]
Let $\subscr{\delta}{$i$-cond}^2 = \sigma_s^2 /\subscr{\sigma}{$i$-cond}^2$.
With a slight abuse of notation, we refer to $n_i(t)$ as the number of times arm $i$ is selected after the initialization phase. We also define for each $i \in \until{N}$
\[
\beta_i =  \sqrt{\frac{ \sigma_s^2(1+ \subscr{\delta}{$i$-cond}^2)} {\nu}} \sum_{j=1}^N \sum_{k=1}^N  |\lambda_{kj}^0| |\mu_0^j - m_j| ,
\]
where $\lambda_{kj}^0$ is the $k,j$ component of $\Lambda_0$. 

\begin{lemma}[\bit{Bounds on variances}]\label{lem:bounds-variances}
The following statements hold for the inference process~\eqref{eq:correlated-inference}:
\begin{enumerate}
\item the variance $\sigma_i^2(t)$ satisfies 
\begin{align*}
\sigma_i^2(t) & \le \frac{\sigma_s^2 }{ \nu + n_i(t)}, \text{ and}\\
\sigma_i^2(t) & \ge \frac{\sigma_s^2}{\subscr{\delta}{$i$-cond}^2 + n_i(t)};
\end{align*}
\item the variance $\bar \sigma_i^2(t)$ satisfies
\begin{align*}
\bar \sigma_i^2(t) & \le \sigma_i^2(t) \sum_{j=1}^N \rho_{ij}^2(t), \text{ and}\\
\bar \sigma_i^2(t) & \ge \frac{n_i(t) \sigma_i^4(t)}{\sigma_s^2}.
\end{align*}
\end{enumerate}
\end{lemma}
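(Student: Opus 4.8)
The plan is to split the four inequalities according to which covariance matrix they concern: the first pair bounds the diagonal $\sigma_i^2(t)$ of the true posterior covariance $\Sigma(t)=(\Lambda_0+P(t)^{-1})^{-1}$, and the second pair bounds the diagonal $\bar\sigma_i^2(t)$ of the estimator covariance, which by its definition together with \eqref{eq:inference-corr-simplified} can be written compactly as $\bar\Sigma(t)=\Sigma(t)P(t)^{-1}\Sigma(t)$. For part (i) I would exploit the rank-one recursion \eqref{eq:sher-morr}: its monotone/reciprocal structure delivers the upper bound, while the precision-matrix (Schur-complement) form of a diagonal entry of the inverse delivers the lower bound. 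Part (ii) then reduces to an entrywise expansion of $\Sigma(t)P(t)^{-1}\Sigma(t)$ combined with part (i).

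For the upper bound in part (i), I would track the reciprocal $u_i(t):=1/\sigma_i^2(t)$. Putting $i_t=i$ in \eqref{eq:sher-morr} gives $\sigma_i^2(t)=\sigma_i^2(t-1)\sigma_s^2/(\sigma_s^2+\sigma_i^2(t-1))$, i.e.\ $u_i(t)=u_i(t-1)+1/\sigma_s^2$, an exact increment; for any other $i_t$, \eqref{eq:sher-morr} subtracts a nonnegative quantity from $\sigma_i^2$, so $u_i$ is nondecreasing. Summing over the steps after initialization, the $n_i(t)$ selections of arm $i$ each contribute $1/\sigma_s^2$, whence $u_i(t)\ge u_i(\subscr{t}{init})+n_i(t)/\sigma_s^2$; since Lemma~\ref{lem:initialization-corr-stat} gives $\sigma_i^2(\subscr{t}{init})\le\sigma_s^2/\nu$, i.e.\ $u_i(\subscr{t}{init})\ge\nu/\sigma_s^2$, inversion yields $\sigma_i^2(t)\le\sigma_s^2/(\nu+n_i(t))$. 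This also explains why $\nu$, rather than $1$, appears.

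For the lower bound in part (i), I would use the block-inverse formula for a diagonal entry. Writing $\lambda_{i,\sim i}^0$ for the $i$-th row of $\Lambda_0$ with its $i$-th entry deleted, $\Lambda_{\sim i,\sim i}^0$ for $\Lambda_0$ with row and column $i$ deleted, and $D_{\sim i}$ for the diagonal matrix of the remaining $n_j(t)/\sigma_s^2$, the fact that $P(t)^{-1}$ is diagonal gives
\[
\frac{1}{\sigma_i^2(t)}=\lambda_{ii}^0+\frac{n_i(t)}{\sigma_s^2}-\lambda_{i,\sim i}^0\big(\Lambda_{\sim i,\sim i}^0+D_{\sim i}\big)^{-1}\lambda_{\sim i,i}^0 .
\]
Since $\Lambda_0$ is positive definite, $\Lambda_{\sim i,\sim i}^0+D_{\sim i}$ is positive definite and the subtracted quadratic form is nonnegative, so $1/\sigma_i^2(t)\le\lambda_{ii}^0+n_i(t)/\sigma_s^2$. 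The remaining ingredient is the identity $\lambda_{ii}^0=1/\subscr{\sigma}{$i$-cond}^2$, which is precisely the Schur-complement characterization of the prior conditional variance already built into the definition of $\subscr{\sigma}{$i$-cond}^2$; substituting and using $\subscr{\delta}{$i$-cond}^2=\sigma_s^2/\subscr{\sigma}{$i$-cond}^2$ gives $\sigma_i^2(t)\ge\sigma_s^2/(\subscr{\delta}{$i$-cond}^2+n_i(t))$.

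Part (ii) then follows by reading off the $(i,i)$ entry of $\bar\Sigma(t)=\Sigma(t)P(t)^{-1}\Sigma(t)$; because $P(t)^{-1}$ is diagonal with entries $n_j(t)/\sigma_s^2$, and $[\Sigma(t)]_{ij}=\rho_{ij}(t)\sigma_i(t)\sigma_j(t)$ with $\rho_{ii}(t)=1$,
\[
\bar\sigma_i^2(t)=\sum_{j=1}^N [\Sigma(t)]_{ij}^2\,\frac{n_j(t)}{\sigma_s^2}=\sigma_i^2(t)\sum_{j=1}^N\rho_{ij}^2(t)\,\frac{n_j(t)\,\sigma_j^2(t)}{\sigma_s^2}.
\]
The upper bound then uses the part-(i) upper bound on arm $j$, namely $n_j(t)\sigma_j^2(t)/\sigma_s^2\le n_j(t)/(\nu+n_j(t))\le 1$, and the lower bound follows by discarding every (nonnegative) term except $j=i$, which leaves $n_i(t)\sigma_i^4(t)/\sigma_s^2$. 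I expect the lower bound in part (i) to be the only real obstacle: it is the one step that cannot be done by scalar bookkeeping on \eqref{eq:sher-morr} and instead requires recognizing the diagonal of $\Lambda_0$ as the inverse conditional variance and checking that adding the data term $D_{\sim i}$ keeps the relevant matrix positive definite, so that the correction is discarded in the favorable direction; the other three bounds are essentially monotonicity or a one-line entrywise expansion.
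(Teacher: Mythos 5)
Your proposal is correct, and for three of the four bounds it follows essentially the paper's own proof: the upper bound in (i) via the Sherman--Morrison recursion (exact precision increment $1/\sigma_s^2$ when $i_t=i$, monotonicity otherwise, initialized at $\nu/\sigma_s^2$ by Lemma~\ref{lem:initialization-corr-stat}), and both bounds in (ii) via the entrywise expansion of $\bar\Sigma(t)=\Sigma(t)P(t)^{-1}\Sigma(t)$, using the part-(i) upper bound for the upper estimate and keeping only the $j=i$ term for the lower one. Where you genuinely depart is the lower bound in (i). The paper argues in covariance space: since the posterior depends only on pull counts, it replays the pulls in blocks with arm $i$ last, bounds the variance drop caused by a block of $n_j(t)$ pulls of arm $j$ by the drop from one \emph{noise-free} observation of arm $j$, and concludes $\sigma_i^2(t-n_i(t))\ge \subscr{\sigma}{$i$-cond}^2$ before the final block, after which reciprocals add. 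You instead work in precision space, writing $1/\sigma_i^2(t)$ exactly via the Schur complement of $\Lambda(t)=\Lambda_0+P(t)^{-1}$, discarding the nonnegative correction term (positive definiteness of $\Lambda^0_{\sim i,\sim i}+D_{\sim i}$), and invoking the identity $\lambda_{ii}^0=1/\subscr{\sigma}{$i$-cond}^2$, which you correctly recognize as the Schur-complement characterization built into the definition. Your route is tighter as a piece of mathematics: it is a one-shot algebraic identity that avoids the paper's informal rearrange-into-blocks step and the compounding of per-block inequalities, whereas the paper's argument buys the probabilistic interpretation (the posterior variance never falls below the prior variance conditioned on noise-free knowledge of the other arms) that motivates $\subscr{\sigma}{$i$-cond}^2$ in the first place. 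One caveat you share with the paper: since $n_i(t)$ is declared to count only post-initialization selections, the diagonal of $P(t)^{-1}$ really carries total pull counts, so both versions of the (i) lower bound are, strictly speaking, established with total counts (marginally weaker if arm $i$ was pulled during initialization); your formulation makes this bookkeeping visible rather than hiding it, but neither resolves it.
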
 
\begin{proof}
We start by establishing the first statement. 
The covariance update in \eqref{eq:correlated-inference} can be simplified using the Sherman-Morrison formula to obtain
\begin{align}\label{eq:sherman-morrison}
\Sigma(t+1) = \Sigma(t) - \frac{\Sigma(t) \phi_t \phi_t^\top \Sigma(t)}{\sigma_s^2 + \phi(t+1)^{\top} \Sigma(t) \phi(t+1)}.
\end{align}
It follows that
\[
\sigma_i^2(t+1) = \sigma_i^2(t) - \frac{\sigma_{i i_t}^2(t)}{\sigma_s^2 + \sigma_{i_t}^2(t)}.
\]
It follows that after the initialization phase $\sigma_i^2(t) \le \nu$. 
Moreover, at each future round, if $i_t \ne i$, then $\sigma_i^2(t+1) \le \sigma_i^2(t)$; otherwise, $\sigma_i^2(t+1) = \sigma_s^2 \sigma_i^2(t)/(\sigma_s^2 + \sigma_i^2(t))$. The upper bound on $\sigma_i^2(t)$ immediately follows from this observation and the induction argument. 

We now establish the lower bound on $\sigma_i^2(t)$. 
Since the inference process involves a stationary environment, the sequence in which arms are played is of no significance and the inference only depends on the number of times an arm has been played. Consequently, the inference is the same if arms are played in blocks. In particular, each arm $j \in \until{N}$ can be played in a block of size $n_j(t)$. Further, any order in which these blocks are played leads to the same inference. 

Suppose for such a modified allocation of arms, $t_j$ is the time when the block associated with arm $j$ begins. Suppose that arm $i$ is played the last.  Then, from~\eqref{eq:sherman-morrison} and for the modified allocation process, it follows that
\begin{align*}
\sigma_{i}^2(t_j+n_j(t)) & = \sigma_{i}^2(t_j) - \frac{n_j(t) \sigma_{ij}^2 (t_j)}{\sigma_s^2 + n_j(t) \sigma_{j}^2(t_j)}\\
& \ge \sigma_{i}^2(t_j) - \frac{\sigma_{ij}^2 (t_j)}{\sigma_{j}^2(t_j)},
\end{align*}
%
%
%
i.e., the posterior variance $\sigma_i^2(t_j + n_j(t))$ is lower bounded by the conditional variance of arm $i$ under a noise free reward from arm $j$. It follows that, for the modified allocation sequence, $\sigma_i^2(t- n_i(t)) \ge \subscr{\sigma}{$i$-cond}^2$. Now, the lower bound follows from the variance update after the last block. 

To establish the second statement, we note that $\bar \Sigma(t) = \Sigma(t) P(t)^{-1} \Sigma(t)$. It follows that
\begin{align*}
\bar \sigma_i^2(t) & = \sum_{j=1}^N  \frac{n_j(t) \sigma_{ij}^2(t)}{\sigma_s^2} 
\le \sigma_i^2(t) \sum_{j=1}^N  \frac{n_j(t) \sigma_j^2(t) \rho_{ij}^2(t)}{\sigma_s^2}\\
& \le  \sigma_i^2(t) \sum_{j=1}^N  \frac{n_j(t) \rho_{ij}^2(t)}{n_j(t) + \nu} \le \sigma_i^2(t) \sum_{j=1}^N  \rho_{ij}^2(t) ,
\end{align*}
where the second inequality follows from the fact $\sigma_j^2(t) \le \sigma_s^2/(n_j(t)+ \nu)$.

Similarly, 
\[
\bar \sigma_i^2(t)  = \sum_{j=1}^N  \frac{n_j(t) \sigma_{ij}^2(t)}{\sigma_s^2} \ge  \frac{n_i(t) \sigma_i^4(t)}{\sigma_s^2},
\]
establishing the lower bound.
\end{proof}


\begin{theorem}[\bit{Regret of correlated UCL algorithm}]\label{thm:corr-regret}
For the Gaussian MAB problem, and the correlated UCL algorithm, the expected number of times a suboptimal arm $i$ is selected after the initialization phase satisfies
\[
\expt[n_i(T)] \le  \eta_i + \hat n_i(T), 
\] 
where $\eta_i = \max\{1, \lceil  \frac{4 \sigma_s^2}{\Delta_i^2}(2 \log K + 2a \log T) - \nu \rceil \} $, and 
\begin{multline*}
\hat n_i(T) = \max \Big\{
e^{ \frac{2 \beta_{i^*}^2 \subscr{\delta}{$i^*$-cond}^2 }{3a  \nu (1 + \subscr{\delta}{$i^*$-cond}^2)}}, e^{\frac{2 \beta_{i^*}^2}{3a}} 
\Big\}  \\ + \frac{3ac_1}{2(3ac_1 -4)}e^{\frac{c_2 \beta_{i}^2}{2}} +  e^{\frac{2 \beta_{i}^2}{3a}} 
+ \frac{3ac_1}{2(3ac_1 -4)}e^{\frac{c_2 \beta_{i}^2}{2}} . 
\end{multline*}
\end{theorem}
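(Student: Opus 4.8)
The plan is to carry over the argument behind Theorem~\ref{thm:uncorr-regret} to the correlated inference~\eqref{eq:inference-corr-simplified}, replacing the scalar bias $m_i - \mu_i^0$ by the correlated estimator bias $\bs e(t)$ and taking the worst case over the four sign regimes so that a single expression for $\hat n_i(T)$ results. A suboptimal arm $i$ is played at time $t$ only if $Q_i(t)\ge Q_{i^*}(t)$, and the usual upper-confidence decomposition gives the inclusion
\[
\{i_t = i\} \subseteq \{Q_{i^*}(t)\le m_{i^*}\}\cup\{\mu_i(t) > m_{i^*} - \sigma_i(t)\sqrt{\textstyle\sum_{j}\rho_{ij}^2(t)}\,\Phi^{-1}(1-\alpha_t)\}.
\]
I would split $\expt[n_i(T)]$ into the selections occurring while $n_i(t)\le\eta_i$, bounded deterministically by $\eta_i$, and the residual selections for which $n_i(t)>\eta_i$, whose expected number is collected into $\hat n_i(T)$.

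The second step fixes the threshold $\eta_i$. Using the variance upper bound $\sigma_i^2(t)\le \sigma_s^2/(\nu + n_i(t))$ from Lemma~\ref{lem:bounds-variances}(i) together with $\Phi^{-1}(1-\alpha_t)\le \sqrt{-2\log\alpha_t}=\sqrt{2\log K + 2a\log t}$ from Lemma~\ref{lem:ineq}(ii), forcing the credible half-width below $\Delta_i/2$ yields exactly $n_i(t)\ge \frac{4\sigma_s^2}{\Delta_i^2}(2\log K + 2a\log T)-\nu$, i.e.\ $\eta_i$. The constant $\nu$ here plays the role of $\delta^2$ in Theorem~\ref{thm:uncorr-regret}, quantifying how a confident, informative prior shortens the exploration phase. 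The inflation factor $\sqrt{\sum_j\rho_{ij}^2(t)}$ in the index is what makes the complementary step valid: by Lemma~\ref{lem:bounds-variances}(ii) it guarantees $\sigma_i(t)\sqrt{\sum_j\rho_{ij}^2(t)}\ge \bar\sigma_i(t)$, so the credible half-width dominates the true estimator standard deviation and the optimal-underestimate event keeps probability at most $\alpha_t$.

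The third step bounds the two residual tail probabilities. Conditioning on the realized sample counts, $\mu_i(t)$ is Gaussian with mean $m_i + e_i(t)$ and variance $\bar\sigma_i^2(t)$, so after normalizing by $\bar\sigma_i(t)$ the Gaussian tail bound $\prob(z\ge w)\le \tfrac12 e^{-w^2/2}$ of Lemma~\ref{lem:ineq}(i) applies. To make the exponents explicit I would lower-bound $\bar\sigma_i^2(t)\ge n_i(t)\sigma_i^4(t)/\sigma_s^2$ and $\sigma_i^2(t)\ge \sigma_s^2/(\delta_{i\text{-cond}}^2 + n_i(t))$ from Lemma~\ref{lem:bounds-variances}, and bound the bias $e_i(t)=[(\Lambda_0+P(t)^{-1})^{-1}\Lambda_0(\bs\mu_0-\bs m)]_i$ entrywise by $\|(\Lambda_0+P(t)^{-1})^{-1}\|\sum_{j}\sum_{k}|\lambda_{kj}^0||\mu_0^j - m_j|$; combined with the conditional-variance scaling, the normalized bias collapses into the single quantity $\beta_i$. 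Completing the square via Lemma~\ref{lem:diff-of-squares} with the stated $c_1,c_2$ separates the $e_i(t)^2$ and $\Phi^{-1}$ contributions, producing exponents of the form $c_2\beta_i^2/2$ and $2\beta_i^2/(3a)$; summing the resulting geometric-type series over $t$ under the condition $a>\frac43(1+\delta_{i\text{-cond}}^2/(1-\epsilon))$ (equivalently $3ac_1>4$) yields the prefactor $\frac{3ac_1}{2(3ac_1-4)}$ and reproduces the four terms of $\hat n_i(T)$, with the $\max$ over the optimal-arm term arising from the two scalings $\sigma_{i^*}^2(t)\ge \sigma_s^2/(\delta_{i^*\text{-cond}}^2+n_{i^*}(t))$ versus the crude $\sigma_{i^*}^2(t)\le \sigma_s^2/\nu$.

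The main obstacle I expect is controlling the bias $e_i(t)$: unlike the diagonal uncorrelated case, $(\Lambda_0+P(t)^{-1})^{-1}\Lambda_0$ couples every arm, so bounding its $i$-th component uniformly in the random, adaptively chosen counts $n_j(t)$—using only the entries $|\lambda_{kj}^0|$, the floor $\nu$, and the conditional variance $\delta_{i\text{-cond}}^2$—is the crux, and it is precisely this estimate that forces the particular form of $\beta_i$. A secondary difficulty is rigorously justifying the conditioning and the union bound over the values of $n_i(t)$ while the policy is adaptive, exactly as in the proof of Theorem~\ref{thm:uncorr-regret}; reconciling the inflated half-width $\sigma_i(t)\sqrt{\sum_j\rho_{ij}^2(t)}$ used in the index with the threshold $\eta_i$, expressed through $\sigma_i(t)$ alone, via the variance comparison in Lemma~\ref{lem:bounds-variances}(ii), is what makes the two halves of the decomposition close.
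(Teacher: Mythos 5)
Your skeleton matches the paper's proof exactly: the same three-event decomposition behind $Q_{i^*}(t)\le Q_i(t)$ (events \eqref{eq:Qi*-corr}--\eqref{eq:miComp-corr}), the threshold $\eta_i$ obtained from the upper variance bound of Lemma~\ref{lem:bounds-variances}(i) together with $\Phi^{-1}(1-\alpha_t)\le\sqrt{2\log K+2a\log t}$, the role of the inflation $\sqrt{\sum_j\rho_{ij}^2(t)}$ in ensuring the half-width dominates $\bar\sigma_i(t)$, and the Lemma~\ref{lem:diff-of-squares} completing-the-square plus series summation giving the $\frac{3ac_1}{2(3ac_1-4)}$ prefactors. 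The genuine gap sits precisely where you predicted one but your proposed tool fails: the bias estimate. Bounding $|e_i(t)|$ by $\|(\Lambda_0+P(t)^{-1})^{-1}\|\sum_{j,k}|\lambda_{kj}^0|\,|\mu_0^j-m_j|$ introduces dimension-dependent constants and, more importantly, yields no factor of $\sigma_i(t)$ in the numerator, so nothing cancels the $\sigma_i^2(t)$ in the lower bound $\bar\sigma_i(t)\ge\sqrt{n_i(t)}\,\sigma_i^2(t)/\sigma_s$, and $|e_i(t)|/\bar\sigma_i(t)$ cannot be bounded uniformly in the counts this way. The paper instead uses the identity $(\Lambda_0+P(t)^{-1})^{-1}=\Sigma(t)$, so that $e_i(t)=\sum_{j,k}\sigma_{ik}(t)\lambda_{kj}^0(\mu_0^j-m_j)$, and applies the covariance Cauchy--Schwarz bound $|\sigma_{ik}(t)|\le\sigma_i(t)\sigma_k(t)\le\sigma_i(t)\sigma_s/\sqrt{\nu}$ (valid after initialization by Lemma~\ref{lem:initialization-corr-stat}); the surviving $\sigma_i(t)$ cancels against the lower bound on $\bar\sigma_i(t)$, and the residual $1/\sigma_i(t)$ is controlled by $\sigma_i^2(t)\ge\sigma_s^2/(\subscr{\delta}{$i$-cond}^2+n_i(t))$ --- this is exactly how $\beta_i$ arises. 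Relatedly, when $n_{i^*}(t)=0$ the estimator is deterministic ($\bar\sigma_{i^*}(t)=0$), so your Gaussian-tail normalization is undefined there; the paper disposes of this case by the deterministic comparison $\sigma_{i^*}(t)\Phi^{-1}(1-\alpha_t)\ge\subscr{\sigma}{$i^*$-cond}\sqrt{(3a/2)\log t}\ge|e_{i^*}(t)|$, and it is this case (versus the $\zeta\ge0$ threshold for $n_{i^*}(t)\ge1$) that produces the two exponents inside the $\max$, not the ``two variance scalings'' you cite.

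A smaller but real defect: your plan to reconcile the inflated half-width with the $\eta_i$ threshold ``via Lemma~\ref{lem:bounds-variances}(ii)'' cannot close, because that lemma bounds $\bar\sigma_i^2(t)$ \emph{above} by $\sigma_i^2(t)\sum_j\rho_{ij}^2(t)$; it provides no upper bound on $\sigma_i^2(t)\sum_j\rho_{ij}^2(t)$ itself, which is what event \eqref{eq:miComp-corr} requires, since $\sum_j\rho_{ij}^2(t)$ can exceed $1$ for strongly correlated priors. Your instinct that this step needs justification is sound --- indeed the paper's own proof passes over it silently, in effect treating the factor as at most one --- but the tool you name points in the wrong direction, so as written this part of your argument would not produce the stated $\eta_i$.
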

\begin{proof}
See~\ref{proof:corr-regret}.
\end{proof}

\begin{remark}[\bit{Regret of correlated UCL algorithm}]\label{rem:corr-regret} \textup{
Recall that the $n_i(T)$ in Theorem~\ref{thm:corr-regret} is the number of selections of a suboptimal arm $i$ after the initialization phase. For an uninformative prior, $\nu=1$ and each arm is selected once in the initialization phase. Consequently, the expression for $\eta_i$ will reduce to the expression in Theorem~\ref{thm:uncorr-regret}. In the expression for $\hat n_i(T)$ in Theorem~\ref{thm:corr-regret}, we consider only the worst case, which corresponds to the first case in~\eqref{eq:subopt-sel-extra}. Other cases can be considered in the spirit of~\eqref{eq:subopt-sel-extra}. However, the number of cases for a correlated prior will be significantly more than four, which is the number of cases for an uncorrelated prior. }

\textup{
The correlated UCL algorithm operates in two phases. The benefit of the correlation structure is most pronounced in the initialization phase: 
as mentioned in Remark~\ref{rem:corr-init},  a highly correlated prior helps reduce the number of initialization steps. Further, if the correlated prior is a true measure of the environment, then the upper bound on $n_i(T)$ will be small.  
However, the $\beta_i$s are large if such a highly correlated prior is not a true measure of the environment, or a high confidence is placed on the priors, i.e., the initial variances are small and the mean rewards in the prior are far from the true mean rewards at the arms. Large $\beta_i$s may lead to a large constant  in the upper bound on $n_i(T)$. 
} \oprocend
\end{remark}

\section{Numerical Illustrations} \label{sec:numerics}
In this section, we illustrate the results of the preceding two sections with data from numerical simulations. The theoretical results pertain to different quality priors defined by how rich is the information they can capture about the rewards associated with the bandit. Uninformative priors capture no information, while uncorrelated informative priors capture beliefs about individual arms. Correlated (informative) priors add to uncorrelated informative priors the ability to capture beliefs about the relationship between different arms, which we leverage in our new correlated UCL algorithm. When an informative prior models the environment well, we refer to it as a \emph{well-informed} prior; conversely, if the prior models the environment poorly, we refer to it as \emph{ill-informed}.

As in \cite{PR-VS-NEL:13d}, our simulations focus on the case of a spatially-embedded bandit problem, for which \cite{PR-VS-NEL:13d} showed that correlated priors can lead to higher performance. The simulations show that, among well-informed priors, those with richer information content result in higher performance. Theorems \ref{thm:uncorr-regret} and \ref{thm:corr-regret} allow us to quantify the extent to which a prior is well-informed.


We consider here the spatially-embedded bandit problem studied in \cite{PR-VS-NEL:13d}.  The reward surface is relatively smooth with regions of both high and low rewards. This means that  a correlated prior capturing length scale information can improve performance. The mean reward value is equal to 30, and the sampling variance for each arm is $\sigma_s^2 = 10$.

Figure \ref{fig:goodPriors} shows simulations from cases where the informative priors are well-informed. Mean cumulative regret computed from an ensemble of 100 simulations is shown for three priors: an uninformative prior, an informative uncorrelated prior, and an informative correlated prior. For all the simulations, the parameter $\epsilon$ was set equal to $1/\sqrt{10} \approx 0.316$, and for correlated priors the parameter $\nu$ was set equal to 1. The informative priors have an initial mean belief $\bs \mu_0$ with a higher value (equal to 100) in regions with high rewards, and a lower value of zero elsewhere. The uncorrelated prior sets $\sigma_0^2 = 10 = \sigma_s^2$, meaning the prior represents the equivalent of a single prior observation. The correlated prior sets $\sigma_i^2(0) = 10$ as in the uncorrelated case, and uses a correlation structure representing an exponential kernel as in \cite{PR-VS-NEL:13d}. This kernel encodes the information that the closer two arms are in the embedding space, the more correlated are their rewards.

The richer information provided by the informative priors results in better performance in this case where the priors are well-informed: the informative correlated prior results in less regret than the informative uncorrelated prior, which in turn results in less regret than the uninformative prior. For short horizons, the informative priors result in cumulative regret which is less than the Lai-Robbins lower bound. The UCL algorithm and the correlated UCL algorithm can violate the lower bound because of the additional information provided by the priors, which effectively shifts the regret curve leftwards. Asymptotically, however, the algorithms will tend to match the Lai-Robbins regret rate for any prior.

In contrast, Figure \ref{fig:badPriors} shows simulations from cases where the informative priors are variously ill-informed. Mean cumulative regret computed from an ensemble of $100$ simulations is shown for three increasingly informative priors, as in Figure \ref{fig:goodPriors}. The informative priors have an initial mean belief $\bs \mu_0$ that is uniform with each element $\mu_i^0 = 30$. As in Figure \ref{fig:goodPriors}, the uncorrelated prior sets $\sigma_0^2 = 10 = \sigma_s^2$, meaning the prior represents the equivalent of a single prior observation. The correlated prior sets $\sigma_i^2(0) = 10$ and uses a correlation structure that again represents an exponential kernel but with a longer length scale to represent a smoother reward surface.

Although the informative priors accurately represent the overall mean value of the reward surface, they fail to capture the spatial heterogeneity of the reward surface, in particular the fact that it has high- and low-value patches. Therefore, both informative priors are ill-informed about the mean rewards and the informative uncorrelated prior results in much poorer performance than the uninformative prior for moderate task horizons. However, by adding the correlation structure to the ill-informed uncorrelated prior, we can recover much of the performance exhibited by the well-informed correlated prior of Figure \ref{fig:goodPriors}. In a spatially-embedded task like the one studied here, information about correlation structure among arms can be as valuable as accurate information about the value of individual arms. 

\begin{figure}[ht!]
   \centering
   \includegraphics[width=0.5\textwidth]{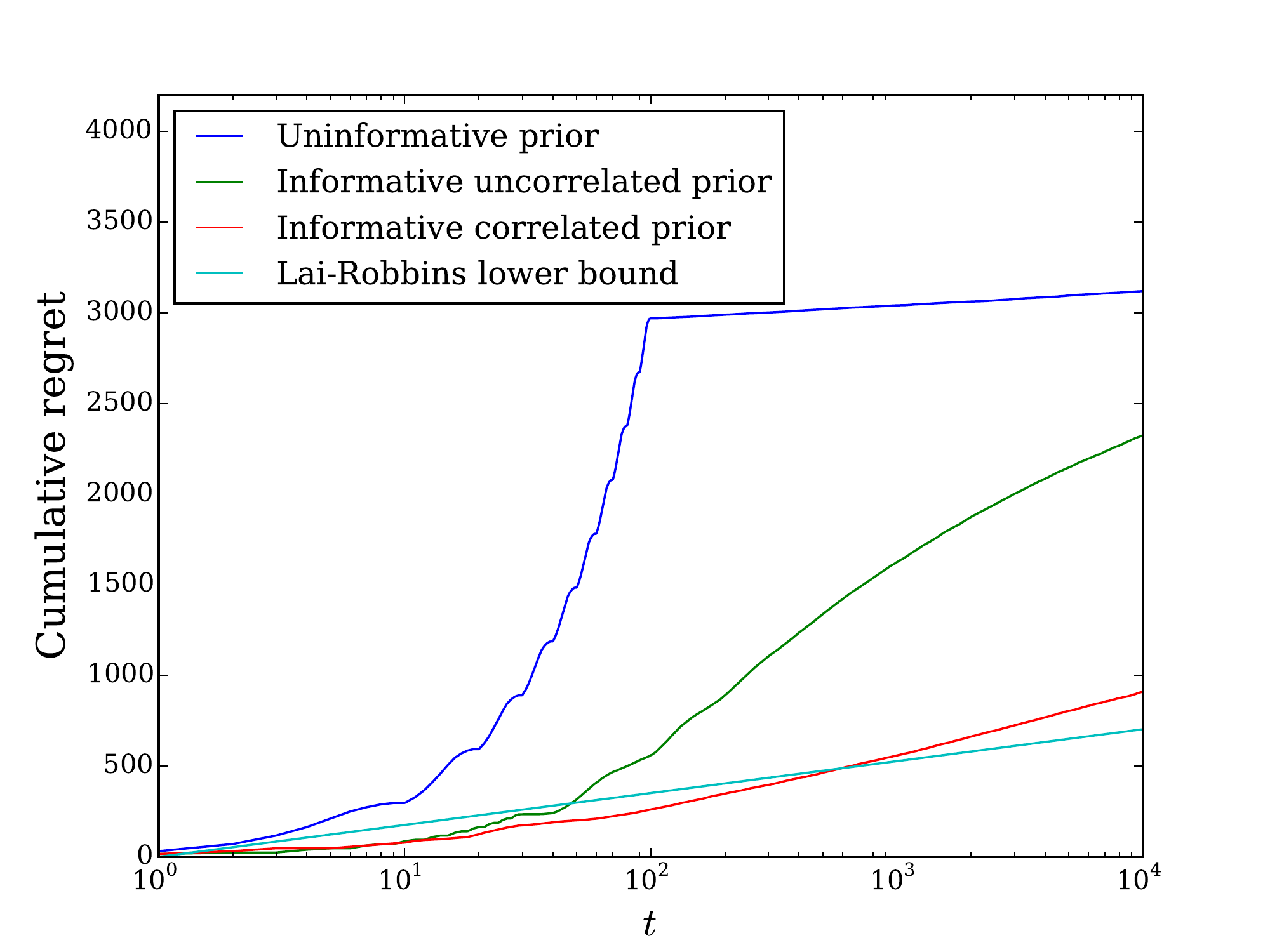} 
   \caption{Well-informed priors. Increasing the amount of information given increases performance. The traces show mean cumulative regret from 100 simulations for each of three different priors that model increasingly rich information about the rewards: the uninformative prior provides no information,  the informative uncorrelated prior provides information about rewards associated to individual arms, and the informative correlated prior adds information about the relationship between rewards associated with different arms. When used with an uninformative prior, the algorithm must begin by sampling each arm once in what is effectively an initialization phase. Upon completing this phase the algorithm can sample arms more selectively which makes the regret grow more slowly, as can be seen in the bend in the curve at $t=100$. Because of the additional information provided by the informative priors, the algorithms can sample arms more selectively from the initial time $t=1$, which results in better performance than the uninformative prior and allows the algorithms to outperform the Lai-Robbins bound on regret.}
   \label{fig:goodPriors}
\end{figure}

\begin{figure}[ht!]
   \centering
   \includegraphics[width=0.5\textwidth]{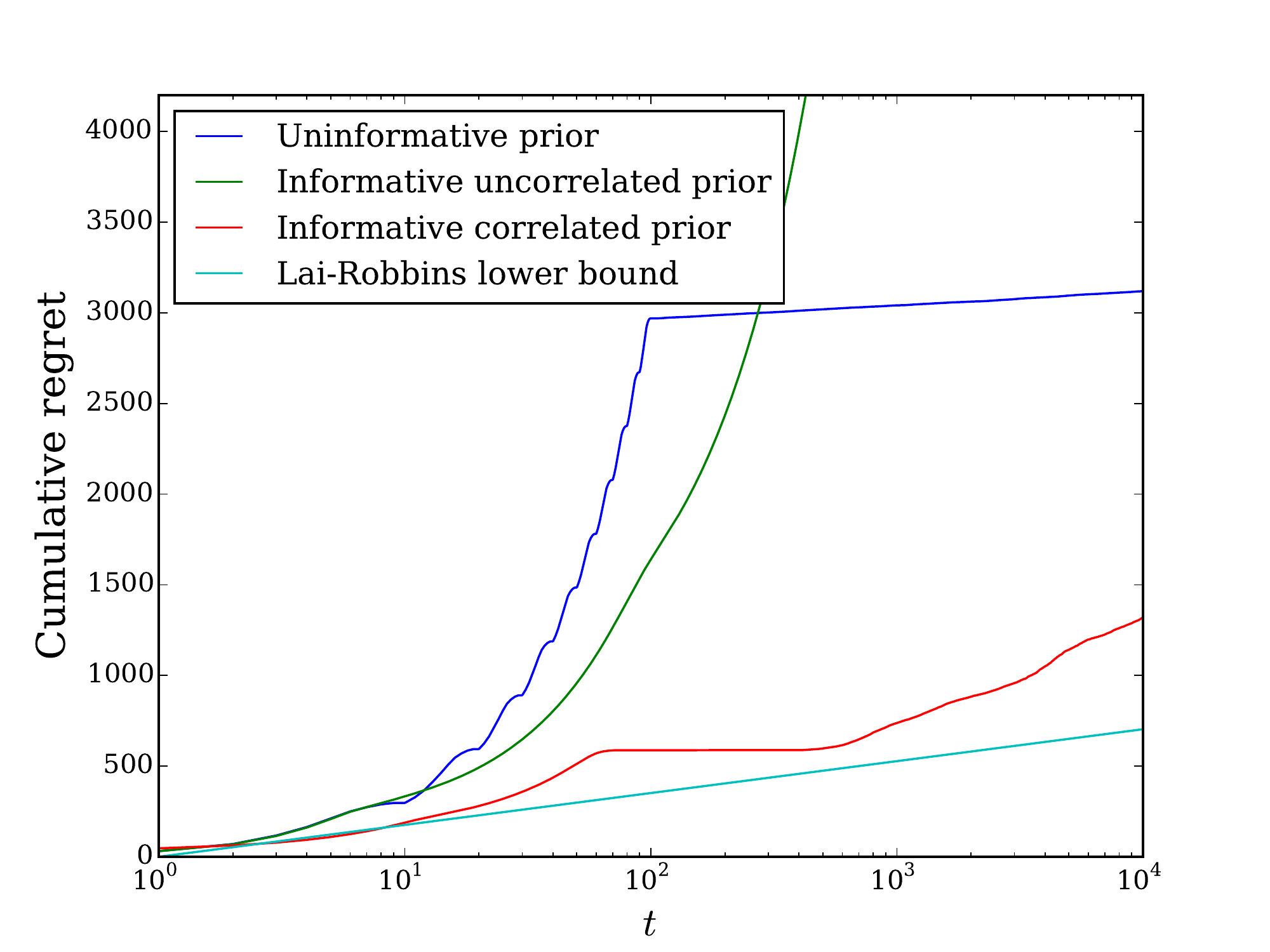} 
   \caption{Ill-informed priors. Increasing the amount of information given can decrease performance. As in Figure \ref{fig:goodPriors}, the traces show mean cumulative regret from 100 simulations for each of three different priors. Again the algorithms exhibit an initialization phase behavior for the uninformative and informative correlated priors, whose end can be seen in the bends in the regret curves near $t=100$. The ill-informed correlated prior improves performance relative to the uninformative prior although not quite as much as the well-informed correlated prior does in Figure \ref{fig:goodPriors}. In contrast, the ill-informed uncorrelated prior significantly decreases performance relative to all other priors. By encoding a strong incorrect belief about the rewards, this prior requires multiple samples of suboptimal arms to learn that they are suboptimal. This appears in the regret curve as an initialization phase that lasts until $t=$ 4,500, at which point the mean cumulative regret is approximately 35,000.}
   \label{fig:badPriors}
\end{figure}

\section{Conclusions and Future Directions} \label{sec:conclusions}
In this note we studied  and modified the UCL algorithm for the correlated MAB problem with Gaussian rewards. We investigated the influence of the assumptions in the prior on the performance of the UCL algorithm and the new correlated UCL algorithm. We characterized scenarios in which the informative priors perform better than the uninformative prior and characterized the improvement in the performance in terms of cumulative regret.   In particular, we showed conditions in which an informative correlated prior can be leveraged to significantly reduce cumulative regret.

There are several possible avenues of future research. First, we considered that the environment is stationary. An interesting future direction is to consider non-stationary environments in which the reward at each arm may be time-varying and the autocorrelation scale may be known. Second, we considered these problems for a single player. Many application scenarios involve a group of individuals and it is of interest to study collaborative and competitive multiplayer versions of these problems.




\appendix

\section{Proof of Theorem~\ref{thm:uncorr-regret}} \label{proof-uncorr-regret}

In the spirit of~\cite{PA-NCB-PF:02}, we bound $n_{i}(T)$ as follows:
\begin{align*}
n_{i}(T) &= \sum_{t=1}^T \indicator{i_t=i}\\
 &\leq \sum_{t = 1}^T \indicator{Q_{i}^t > Q_{i^*}^t}\\
  & \leq \eta_i + \sum_{t = 1}^T \indicator{Q_{i}^t > Q_{i^*}^t, n_{i}(t-1) \geq \eta_i},
 \end{align*}
where $\eta_i$ is some positive integer and $\indicator{x}$ is the indicator function, with $\indicator{x} = 1$ if $x$ is a true statement and $0$ otherwise.

At time $t$, the agent picks option $i$ over $i^*$ only if
\[ Q_{i^*}^t \leq Q_{i}^t.\]
This is true when at least one of the following equations holds:
\begin{align}
\mu_{i^*}(t) &\leq m_{i^*} - C_{i^*}(t) \label{eq:Qi*}\\
\mu_{i}(t) &\geq m_i + C_{i}(t)\label{eq:Qi}\\
m_{i^*} &< m_i + 2C_{i}(t)\label{eq:miComp}
\end{align}
where $C_{i}(t) = \frac{\sigma_s}{\sqrt{\delta^2 + n_{i}(t)}} \Phi^{-1}(1-\alpha_t)$ and $\alpha_t = 1/Kt^a$. Otherwise, if none of the equations (\ref{eq:Qi*})-(\ref{eq:miComp}) holds,
\begin{multline*}
Q_{i^*}(t) = \mu_{i^*}(t)+ C_{i^*}(t) > m_{i^*} \\
\geq m_i + 2C_{i}(t )> \mu_{i}(t) + C_{i}(t) = Q_{i}(t),
\end{multline*}
and option $i^*$ is picked over option $i$ at time t.
%

As noted earlier, the posterior mean $\mu_i(t)$ is a Gaussian random variable:
\[ 
\mu_{i}(t) \sim \mathcal{N}\left(\frac{\delta^2 \mu_{i}^0+ n_{i}(t)  m_i}{\delta^2 + n_{i}(t)}, \frac{n_{i}(t) \sigma_s^2}{(\delta^2 + n_{i}(t) )^2} \right).
\]
We will now analyze the events~\eqref{eq:Qi*},~\eqref{eq:Qi},~and~\eqref{eq:miComp}. Let $\prob_1(t)$  be the probability of the event~\eqref{eq:Qi*}.
\begin{lemma}[\bit{Probability of event~\eqref{eq:Qi*}}]\label{lem-first-event-uncorr}
The following statements hold for  event~\eqref{eq:Qi*}:
\begin{enumerate}
\item if $\Delta m_{i^*} \le 0 $, then 
\[
\sum_{t=1}^T \prob_1(t) \le \frac{a}{K(a-1)}.
\]
\item if $\Delta m_{i^*} > 0 $, then 
\begin{align*}
\sum_{t=1}^{T} \prob_1(t)  \le \max \Big\{ e^{\frac{2 \delta^4 \Delta m_{i^*}^2}{3 a \sigma_s^2 }}, e^{\frac{2 \Delta m_{i^*}^2}{3 a \sigma_0^2}} \Big\} 
+ \frac{3 a c_1}{2(3 a c_1 -4)} e^{ \frac{c_2 \delta^4 \Delta m_{i^*}^2}{2\sigma_s^2}}.
\end{align*}
\end{enumerate}
\end{lemma}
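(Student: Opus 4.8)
The plan is to first condition on $n_{i^*}(t)=n$ and rewrite event~\eqref{eq:Qi*} as a deviation of the standard normal $z$. Since $\mu_{i^*}(t)\sim \mathcal N\big(m_{i^*}-\tfrac{\delta^2\Delta m_{i^*}}{\delta^2+n},\,\tfrac{n\sigma_s^2}{(\delta^2+n)^2}\big)$ and $C_{i^*}(t)=\tfrac{\sigma_s}{\sqrt{\delta^2+n}}\Phi^{-1}(1-\alpha_t)$, a direct standardization and the symmetry $\Phi(-x)=\prob(z\ge x)$ give
\[
\prob_1(t)=\prob\!\big(z\ge w_t\big),\qquad w_t=\frac{\sqrt{\delta^2+n}\,\Phi^{-1}(1-\alpha_t)-\delta^2\Delta m_{i^*}/\sigma_s}{\sqrt{n}}.
\]
When $n=0$ the posterior mean is the deterministic prior mean and the event collapses to $\Delta m_{i^*}\ge\sigma_0\Phi^{-1}(1-\alpha_t)$, which I will treat separately. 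Everything hinges on the sign and size of $w_t$, which is governed by the sign of $\Delta m_{i^*}$.

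\textbf{The benign case $\Delta m_{i^*}\le 0$.}
Here $-\delta^2\Delta m_{i^*}/\sigma_s\ge 0$ and $\sqrt{\delta^2+n}/\sqrt n\ge 1$, so $w_t\ge \Phi^{-1}(1-\alpha_t)$ for every $n\ge 1$ (and the $n=0$ event is impossible). Hence $\prob_1(t)\le \prob(z\ge\Phi^{-1}(1-\alpha_t))=\alpha_t=1/(Kt^a)$. Summing and comparing with an integral, $\sum_{t=1}^T\prob_1(t)\le \tfrac1K\big(1+\int_1^\infty t^{-a}\,dt\big)=\tfrac{a}{K(a-1)}$, using $a>1$. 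This is statement (i).

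\textbf{The adverse case $\Delta m_{i^*}>0$.}
Now the subtracted term drives $w_t$ small (even negative) for small $t$, so I would split the horizon into a burn-in block, bounding $\prob_1(t)\le 1$, and a tail, where $w_t>0$ and the Gaussian tail bound of Lemma~\ref{lem:ineq}(i) applies. To size the burn-in I would use $\Phi^{-1}(1-\alpha_t)^2\ge \tfrac{3a}{2}\log t$ from Lemma~\ref{lem:ineq}(ii): in the $n=0$ regime the deterministic event fails once $t>e^{2\Delta m_{i^*}^2/(3a\sigma_0^2)}$, while for $n\ge 1$, lower bounding $\sqrt{\delta^2+n}\ge 1$ makes $w_t>0$ once $t>e^{2\delta^4\Delta m_{i^*}^2/(3a\sigma_s^2)}$; taking the larger of the two yields the $\max\{\cdot,\cdot\}$ term and guarantees $w_t>0$ past the burn-in. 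On the tail I would write $w_t^2=\tfrac1n\big(\sqrt{\delta^2+n}\,\Phi^{-1}(1-\alpha_t)-\delta^2\Delta m_{i^*}/\sigma_s\big)^2$ and apply Lemma~\ref{lem:diff-of-squares} with $x=\sqrt{\delta^2+n}\,\Phi^{-1}(1-\alpha_t)$, $y=\delta^2\Delta m_{i^*}/\sigma_s$ and the stated $c_1,c_2$ (for which $(1-c_1)(1+c_2)=1$ is checked directly), decoupling the exponent:
\[
\prob_1(t)\le \tfrac12\exp\!\Big(-\tfrac{c_1(\delta^2+n)}{2n}\Phi^{-1}(1-\alpha_t)^2\Big)\exp\!\Big(\tfrac{c_2\delta^4\Delta m_{i^*}^2}{2n\sigma_s^2}\Big).
\]
Using $\tfrac{\delta^2+n}{n}\ge 1$, $\Phi^{-1}(1-\alpha_t)^2\ge\tfrac{3a}{2}\log t$, and $n\ge 1$ bounds this by $\tfrac12\,t^{-3ac_1/4}\,e^{c_2\delta^4\Delta m_{i^*}^2/(2\sigma_s^2)}$. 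The hypothesis $a>\tfrac43(1+\tfrac{\delta^2}{1-\epsilon})$ is precisely $3ac_1>4$, so $\sum_{t\ge 1}t^{-3ac_1/4}\le \tfrac{3ac_1}{3ac_1-4}$, producing the second term $\tfrac{3ac_1}{2(3ac_1-4)}e^{c_2\delta^4\Delta m_{i^*}^2/(2\sigma_s^2)}$. Adding the burn-in count gives statement (ii).

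\textbf{Main obstacle.}
The delicate points are that $n=n_{i^*}(t)$ is random and unobserved, so every bound must hold uniformly in $n$ — this is what forces the separate $n=0$ and $n\ge 1$ treatments and the appearance of the $\max$ in the burn-in term — and that $c_1,c_2$ must be chosen so the difference-of-squares step simultaneously leaves a summable power $t^{-3ac_1/4}$ and an $n$-free constant $e^{c_2\delta^4\Delta m_{i^*}^2/(2\sigma_s^2)}$. Verifying $3ac_1>4$ from the assumption on $a$ is exactly what makes the tail converge, and is the crux of the estimate.
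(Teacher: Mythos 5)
Your proof is correct and follows essentially the same route as the paper's: standardize event~\eqref{eq:Qi*} into a Gaussian tail in $z$, split on the sign of $\Delta m_{i^*}$ with separate treatment of $n_{i^*}(t)=0$ and $n_{i^*}(t)\ge 1$, absorb the burn-in into the $\max$ term via the bound $\Phi^{-1}(1-\alpha_t)\ge\sqrt{(3a/2)\log t}$ from Lemma~\ref{lem:ineq}, and sum the tail $t^{-3ac_1/4}$ using Lemma~\ref{lem:diff-of-squares} together with the observation that $a>\tfrac43(1+\tfrac{\delta^2}{1-\epsilon})$ is exactly $3ac_1>4$. The only cosmetic difference is the order of operations: you apply the difference-of-squares step while retaining the $n$-dependence and only then bound uniformly in $n\ge 1$, whereas the paper first replaces $w_t$ by $\zeta=\sqrt{(3a/2)\log t}-\delta^2\Delta m_{i^*}/\sigma_s$ and then applies the lemma; the resulting constants are identical.
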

\begin{proof}
For $n_{i^*}(t) \ge 1$,  event~\eqref{eq:Qi*} is true if 
\begin{align*}
&m_{i^*} \geq \mu_{i^*}(t) + \frac{\sigma_s}{\sqrt{\delta^2 + n_{i}(t)}} \Phi^{-1}(1-\alpha_t)\\
\iff &m_{i^*}-\mu_{i^*}(t) \geq \frac{\sigma_s}{\sqrt{\delta^2 + n_{i}(t)}} \Phi^{-1}(1-\alpha_t)\\
\iff & z \leq -\sqrt{\frac{n_{i^*}(t) + \delta^2}{n_{i^*}(t)}}  \Phi^{-1}(1-\alpha_t) 
 + \frac{\delta^2}{\sigma_s} \frac{\Delta m_{i^*}}{\sqrt{n_{i^*}(t)}},
\end{align*}
where $z \sim \mcN(0,1)$ is a standard normal random variable. 

Similarly, for  $n_{i^*}(t) = 0$,  event~\eqref{eq:Qi*} is not true if (i) $\Delta m_{i^*} \le 0$, or (ii) $\Delta m_{i^*} >0 $ and $\Phi^{-1}(1- \alpha_t) \ge  {\Delta m_{i^*}}/ {\sigma_0}$.

We now establish the first statement. If $\Delta m_{i^*} \le 0$ and $n_{i^*}(t) =0$, then $\prob_1(t) =0$.
If $\Delta m_{i^*} \le 0$ and $n_{i^*}(t)  \ge 1$, then 
\begin{align*}
\prob_1(t) &\le  \prob \Big(  z \ge  \Phi^{-1}(1-\alpha_t) 
 - \frac{\delta^2 \Delta m_{i^*}}{\sigma_s} \Big) \\
 &\le  \prob (  z \ge  \Phi^{-1}(1-\alpha_t) ) = \alpha_t.
\end{align*}
Therefore,
\[
\sum_{t=1}^T \prob_1(t)  \le \sum_{t=1}^{+\infty} \frac{1}{Kt^a} \le \frac{1}{K} + \frac{1}{K(a-1)}= \frac{a}{K(a-1)}.
\]

To establish the second statement, we note that if $\Delta m_{i^*} > 0$ and $n_{i^*}(t) =0$, then event~\eqref{eq:Qi*} does not hold if 
\begin{align*}
\Phi^{-1}(1- \alpha_t) > \sqrt{\frac{3a}{2} \log t} \ge  \frac{\Delta m_{i^*}}{\sigma_0} 
\implies  t >   e^{2 \Delta m_{i^*}^2/3 a \sigma_0^2}.
\end{align*}
If $\Delta m_{i^*} > 0$ and $n_{i^*}(t)  \ge 1$, then $\prob_1(t) \le \prob(z \ge \zeta)$, where $\zeta =  \sqrt{\frac{3a}{2} \log t} - \frac{\delta^2 \Delta m_{i^*}}{\sigma_s}$. Note that $\zeta \ge 0$, if $t \ge e^{\frac{2 \delta^4 \Delta m_{i^*}^2}{3a \sigma_s^2}}$.
Define 
\[
t^{\dag}_1 = \max \Big\{ e^{\frac{2 \delta^4 \Delta m_{i^*}^2}{3 a \sigma_s^2 }}, e^{\frac{2 \Delta m_{i^*}^2}{3 a \sigma_0^2}} \Big\}.
\]
It follows that for $t \ge  t^{\dag}$, 
\begin{align*}
\prob_1(t)  & \le \frac{1}{2} e^{-\zeta^2/2} \\
& \le \frac{1}{2} \exp\Big( -\frac{1}{2} \Big( \sqrt{\frac{3a}{2} \log t} -  \frac{\delta^2 \Delta m_{i^*}}{\sigma_s}  \Big)^2  \Big) \\
&  \le \frac{1}{2} \exp\Big( -\frac{1}{2} \Big( \frac{3a c_1}{2} \log t - \frac{c_2 \delta^4 \Delta m_{i^*}^2}{\sigma_s^2} \Big)  \Big) \\
& = \frac{1}{2} e^{ \frac{c_2 \delta^4 \Delta m_{i^*}^2}{2\sigma_s^2}} t^{- \frac{3 a c_1}{4}},
\end{align*}
where the second last inequality follows from Lemma~\ref{lem:diff-of-squares} and $c_1$ and $c_2$ are as defined in Section \ref{sec:uncorr-UCL-regret}.

Therefore,
\begin{align*}
\sum_{t=1}^{T} \prob_1(t) & \le t^{\dag}_1 + \sum_{t=1}^{\infty}\frac{1}{2} e^{ \frac{c_2 \delta^4 \Delta m_{i^*}^2}{2\sigma_s^2}} t^{- \frac{3 a c_1}{4}} \\
& \le t_1^{\dag} + \frac{3 a c_1}{2(3 a c_1 -4)} e^{ \frac{c_2 \delta^4 \Delta m_{i^*}^2}{2\sigma_s^2}}.
\end{align*}
\end{proof}
 Let $\prob_2(t)$  be the joint probability of the event~\eqref{eq:Qi} and the event $n_i(t)>\eta_i$, for some $\eta_i \in \naturals$.
\begin{lemma}[\bit{Probability of event~\eqref{eq:Qi}}]\label{lem-sec-event-uncorr}
The following statements hold for  event~\eqref{eq:Qi}:
\begin{enumerate}
\item if $\Delta m_{i} < 0 $, then 
\begin{align*}
\sum_{t=1}^{T} \prob_2(t) &  \le e^{\frac{2 \delta^4 \Delta m_{i}^2}{3 a \sigma_s^2 \eta_i}}  +  \frac{3 a c_1}{2(3 a c_1 -4)}e^{\frac{c_2 \delta^4 \Delta m_{i}^2}{ 2\sigma_s^2 \eta_i} }.
\end{align*}
\item if $\Delta m_{i} \ge 0 $, then 
\[
\sum_{t=1}^T \prob_2(t)  \le  \frac{a}{K(a-1)}.
\]
\end{enumerate}
\end{lemma}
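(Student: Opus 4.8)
The plan is to mirror the argument for event~\eqref{eq:Qi*} in Lemma~\ref{lem-first-event-uncorr}, exploiting the symmetry of the two events under $z \mapsto -z$. First I would standardize the posterior mean: writing $\mu_i(t) = \expt[\mu_i(t)] + z\sqrt{\text{Var}[\mu_i(t)]}$ with $z \sim \mcN(0,1)$, using the stated moments, and simplifying $\delta^2(\mu_i^0 - m_i) = -\delta^2 \Delta m_i$, event~\eqref{eq:Qi} becomes equivalent (after multiplying through by the positive factor $(\delta^2+n_i(t))/(\sigma_s\sqrt{n_i(t)})$) to
\[
z \ge \sqrt{\frac{\delta^2 + n_i(t)}{n_i(t)}}\, \Phi^{-1}(1-\alpha_t) + \frac{\delta^2 \Delta m_i}{\sigma_s \sqrt{n_i(t)}},
\]
which is precisely the reflection of the inequality obtained for event~\eqref{eq:Qi*}, with $\Delta m_{i^*}$ replaced by $-\Delta m_i$ and $n_{i^*}(t)$ by $n_i(t)$.

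For statement (ii), when $\Delta m_i \ge 0$ the bias term is nonnegative and $\sqrt{(\delta^2+n_i(t))/n_i(t)} \ge 1$, so the threshold is at least $\Phi^{-1}(1-\alpha_t)$; hence $\prob_2(t) \le \prob(z \ge \Phi^{-1}(1-\alpha_t)) = \alpha_t = 1/(Kt^a)$. Summing this $p$-series exactly as in Lemma~\ref{lem-first-event-uncorr} yields $\sum_{t=1}^T \prob_2(t) \le a/(K(a-1))$.

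For statement (i), when $\Delta m_i < 0$ the bias term is negative, so I would lower bound the threshold. Here the constraint $n_i(t) > \eta_i$ built into $\prob_2(t)$ is essential: since $\Delta m_i < 0$, a larger $n_i(t)$ shrinks the magnitude of the negative bias, giving $\frac{\delta^2 \Delta m_i}{\sigma_s\sqrt{n_i(t)}} \ge \frac{\delta^2 \Delta m_i}{\sigma_s\sqrt{\eta_i}}$. Combining this with $\sqrt{(\delta^2+n_i(t))/n_i(t)} \ge 1$ and the bound $\Phi^{-1}(1-\alpha_t) > \sqrt{(3a/2)\log t}$ from Lemma~\ref{lem:ineq}(ii), the threshold is at least $\zeta := \sqrt{(3a/2)\log t} + \frac{\delta^2 \Delta m_i}{\sigma_s\sqrt{\eta_i}}$. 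One checks $\zeta \ge 0$ precisely for $t \ge t_2^\dagger := e^{2\delta^4 \Delta m_i^2/(3a\sigma_s^2\eta_i)}$, which supplies the first term of the claimed bound. For $t \ge t_2^\dagger$ I would apply the tail bound $\prob(z \ge \zeta) \le \tfrac{1}{2}e^{-\zeta^2/2}$ from Lemma~\ref{lem:ineq}(i) and then invoke Lemma~\ref{lem:diff-of-squares} to linearize $\zeta^2 = (\sqrt{(3a/2)\log t} - \tfrac{\delta^2|\Delta m_i|}{\sigma_s\sqrt{\eta_i}})^2 \ge \tfrac{3ac_1}{2}\log t - \tfrac{c_2\delta^4\Delta m_i^2}{\sigma_s^2\eta_i}$, giving $\prob_2(t) \le \tfrac{1}{2}e^{c_2\delta^4\Delta m_i^2/(2\sigma_s^2\eta_i)}\, t^{-3ac_1/4}$.

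Finally I would sum over $t$. The series $\sum_t t^{-3ac_1/4}$ converges because the standing assumption $a > \tfrac{4}{3}(1+\tfrac{\delta^2}{1-\epsilon})$ is exactly equivalent to $3ac_1 > 4$ for $c_1 = \tfrac{1-\epsilon}{1+\delta^2-\epsilon}$; bounding it by $1 + \int_1^\infty t^{-3ac_1/4}\,dt = \tfrac{3ac_1}{3ac_1-4}$ produces the constant $\tfrac{3ac_1}{2(3ac_1-4)}$ in front of the second exponential, and adding $t_2^\dagger$ gives statement (i). The only genuine obstacle is careful bookkeeping with the sign of $\Delta m_i$ and with the $n_i(t) > \eta_i$ constraint, which is what places $\eta_i$ in the exponents; the probabilistic content is identical to Lemma~\ref{lem-first-event-uncorr}.
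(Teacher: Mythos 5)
Your proposal is correct and follows essentially the same route as the paper's proof: the identical standardization to $z \ge \sqrt{(\delta^2+n_i(t))/n_i(t)}\,\Phi^{-1}(1-\alpha_t) + \frac{\delta^2\Delta m_i}{\sigma_s\sqrt{n_i(t)}}$, the same use of $n_i(t) > \eta_i$ together with Lemma~\ref{lem:ineq} to reduce to the threshold $\zeta$ with cutoff $t_2^\dagger$, Lemma~\ref{lem:diff-of-squares} to linearize $\zeta^2$, and the same tail-plus-series bound yielding $\frac{3ac_1}{2(3ac_1-4)}$. Your explicit checks (that $3ac_1>4$ is equivalent to the standing assumption on $a$, and the reduction of case (ii) to the argument of Lemma~\ref{lem-first-event-uncorr}) simply spell out details the paper leaves implicit.
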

\begin{proof}
The event~(\ref{eq:Qi}) holds if
\begin{align*}
& m_i \leq \mu_{i}(t) - \frac{\sigma_s}{\sqrt{\delta^2 + n_{i}(t)}} \Phi^{-1}(1-\alpha_t)\\
\iff & \mu_{i}^t -m_i \geq \frac{\sigma_s}{\sqrt{\delta^2 + n_{i}(t)}} \Phi^{-1}(1-\alpha_t)\\
\iff & z \geq \sqrt{\frac{n_{i}(t) + \delta^2}{n_{i}(t)}} \Phi^{-1}(1-\alpha_t) + \frac{\delta^2}{\sigma_s} \frac{\Delta m_i}{\sqrt{n_{i}(t)}},
\end{align*}
where $z \sim \mcN(0,1)$ is a standard normal random variable.

We start with establishing the first statement. If $\Delta m_{i} < 0$ and $n_i(t) >\eta_i$, then 
\begin{align*}
\prob_2(t) & \le  \prob \Big(  z \ge  \Phi^{-1}(1-\alpha_t) 
 + \frac{\delta^2}{\sigma_s} \frac{\Delta m_{i}}{\sqrt{\eta_i}}  \Big) \\
 & \le  \prob (  z \le \zeta),
\end{align*}
where $\zeta =  \sqrt{\frac{3a}{2} \log t }
 + \frac{\delta^2}{\sigma_s} \frac{\Delta m_{i}}{\sqrt{\eta_i}}$.
 
It follows that $\zeta \ge 0$, if $t \ge  t^{\dag}_2 := e^{\frac{2 \delta^4 \Delta m_{i}^2}{3 a \sigma_s^2 \eta_i}}$.
It follows that for $t \ge  t^{\dag}_2$
\begin{align*}
\prob_2(t) & \le \frac{1}{2} e^{-\zeta^2/2} \\
& \le \frac{1}{2} \exp\Big( -\frac{1}{2} \Big( \sqrt{\frac{3a}{2} \log t} -  \frac{\delta^2}{\sigma_s} \frac{\Delta m_{i}}{\sqrt{\eta_i}} \Big)^2  \Big) \\
&\le \frac{1}{2} \exp\Big( -\frac{1}{2} \Big( \frac{3a c_1 \log t}{2} - \frac{c_2 \delta^4 \Delta m_{i}^2}{\sigma_s^2 \eta_i} \Big)  \Big) \\
&= \frac{1}{2} e^{\frac{c_2 \delta^4 \Delta m_{i}^2}{ 2\sigma_s^2 \eta_i} }t^{-\frac{3ac_1}{4}},
\end{align*}
where the second last inequality follows from Lemma~\ref{lem:diff-of-squares}. Therefore,
\begin{align*}
\sum_{t=1}^{T} \prob_1(t) & \le t^{\dag}_2 + \sum_{t=1}^{\infty}\frac{1}{2}e^{\frac{c_2 \delta^4 \Delta m_{i}^2}{ 2\sigma_s^2 \eta_i} }t^{-\frac{3ac_1}{4}} \\
& \le t^{\dag}_2  +  \frac{3 a c_1}{2(3 a c_1 -4)}e^{\frac{c_2 \delta^4 \Delta m_{i}^2}{ 2\sigma_s^2 \eta_i} }.
\end{align*}
The second statement follows similarly to the first statement in Lemma~\ref{lem-first-event-uncorr}.
\end{proof}

We now analyze the probability of event~\eqref{eq:miComp}. 
\begin{align}
&m_{i^*} < m_i + \frac{2 \sigma_s}{\sqrt{\delta^2 + n_{i}(t)}}\Phi^{-1}(1-\alpha_t) \nonumber\\
\iff & \Delta_i <  \frac{ 2 \sigma_s}{\sqrt{\delta^2 + n_{i}(t)}}\Phi^{-1}(1-\alpha_t) \nonumber\\
\implies & \frac{\Delta_i^2}{4 \sigma_s^2}(\delta^2 + n_{i}(t)) < - 2 \log \alpha_t \label{eq:third-condition}\\
\iff & \frac{\Delta_i^2}{4 \sigma_s^2}(\delta^2 + n_{i}(t)) < 2 \log K + 2a \log t \nonumber\\ 
\implies & \frac{\Delta_i^2}{4 \sigma_s^2}(\delta^2 + n_{i}(t)) < 2 \log K + 2a \log T \label{eq:monotonicity} 
\end{align}
where $\Delta_i = m_{i^*}-m_i$, the inequality~\eqref{eq:third-condition} follows from Lemma~\ref{lem:ineq}, and the inequality~\eqref{eq:monotonicity} follows from the monotonicity of the logarithmic function. 
Therefore,  the event~(\ref{eq:miComp}) is not true if  
\begin{align*}
n_{i}(t)  \ge \frac{4  \sigma_s^2}{\Delta_i^2}(2 \log K + 2a \log T) - \delta^2.
\end{align*}
Setting $\eta_i = \max\{1, \lceil  \frac{4 \sigma_s^2}{\Delta_i^2}(2 \log K + 2a \log T) - \delta^2\rceil \} $, we get
\begin{align*}
 \E{n_{i}^T} &\leq \eta_i + \sum_{t = 1}^T \prob(Q_{i}^t > Q_{i^*}^t, n_{i}(t-1) \geq \eta_i) \\
 & = \eta_i + \sum_{t = 1}^T \big(\prob_1(t) + \prob_2(t)\big)\\
 &<  \eta_i + \hat n_i(t). 
\end{align*}
This completes the proof of the theorem.

\section{Proof of Theorem~\ref{thm:corr-regret}} \label{proof:corr-regret}
Similar to the proof of Theorem~\ref{thm:uncorr-regret},
at time $t$, the agent picks option $i$ over $i^*$ only if $Q_{i^*}^t \leq Q_{i}^t$. 
This is true when at least one of the following equations holds:
\begin{align}
\mu_{i^*}(t) &\le m_{i^*} - C_{i^*}(t)  \label{eq:Qi*-corr}\\
\mu_{i}(t) &\ge m_i + C_{i}(t)   \label{eq:Qi-corr}\\
m_{i^*} &< m_i + 2C_{i}(t)   \label{eq:miComp-corr}
\end{align}
where $C_{i}(t) = \sigma_i(t) \sqrt{\sum_{j=1}^N {\rho_{ij}^2(t)}}  \Phi^{-1}(1-\alpha_t)$, $\alpha_t = 1/Kt^a$. 

For $n_i(t) \ge 1$ and $n_{i^*}(t) \ge 1$, equations~\eqref{eq:Qi*-corr}~and~\eqref{eq:Qi-corr} reduce to
\begin{align*}
z  & \ge  \frac {\sigma_{i^*}(t) \sqrt{\sum_{i=1}^N {\rho_{ij}^2(t)}}}{\bar \sigma_{i^*}(t)} \Phi^{-1}(1-\alpha_t) + \frac{e_{i^*}(t)}{\bar \sigma_{i^*}(t)}, \text{ and} \\
z  & \ge  \frac {\sigma_{i}(t) \sqrt{\sum_{i=1}^N {\rho_{ij}^2(t)}}}{\bar \sigma_{i}(t)} \Phi^{-1}(1-\alpha_t) - \frac{e_{i}(t)}{\bar \sigma_{i}(t)},
\end{align*}
respectively, where $e_i(t) = \sum_{j=1}^N \sum_{k=1}^N \sigma_{ik}(t) \lambda_{kj}^0 (\mu_0^j - m_j)$. 

\noindent
It follows that, for $n_{i^*}(t) \ge 1$,
\begin{align*}
\frac{|e_{i^*}(t)|}{\bar\sigma_{i^*}(t)} & \le  \frac{ \sigma_s \sum_{j=1}^N \sum_{k=1}^N \sigma_{i^*}(t) \sigma_k(t) |\lambda_{kj}^0| |\mu_0^j - m_j|}{\sqrt{n_{i^*}(t)}  \sigma_{i^*}^2 (t)} \\
& \le \frac{ \sigma_s^2 \sum_{j=1}^N \sum_{k=1}^N  |\lambda_{kj}^0| |\mu_0^j - m_j|}{ \sqrt{n_{i^*}(t) \nu}  \sigma_{i^*}(t)} \\ 
& \le \sigma_s \sqrt{ \frac{n_{i^*}(t) + \subscr{\delta}{$i^*$-cond}^2 }{ n_{i^*}(t) \nu} }  \sum_{j=1}^N \sum_{k=1}^N  |\lambda_{kj}^0| |\mu_0^j - m_j|\\
& \le \sigma_s \sqrt{ \frac{1 + \subscr{\delta}{$i^*$-cond}^2 }{ \nu} }  \sum_{j=1}^N \sum_{k=1}^N  |\lambda_{kj}^0| |\mu_0^j - m_j| = \beta_{i^*}.
\end{align*}
For $n_{i^*}(t) =0$, event~\eqref{eq:Qi*-corr} does not hold if
\begin{align*}
\sigma_{i^*}(t) \Phi^{-1}(1-\alpha_t) & \ge \subscr{\sigma}{$i^*$,cond}\sqrt{\frac{3a}{2}\log t} \\
& \ge \frac{\sigma_s^2}{\nu}\sum_{j=1}^N \sum_{k=1}^N  |\lambda_{kj}^0| |\mu_j^0-m_j| \\
& \ge |e_{i^*}(t)|.
\end{align*}
Thus, for $n_{i^*}(t)=0$, event~\eqref{eq:Qi*-corr} does not hold if 
\begin{align*}
t & \ge e^{ \frac{2 \beta_{i^*}^2 \subscr{\delta}{$i^*$-cond}^2}{\nu(1+ \subscr{\delta}{$i^*$-cond}^2)}}.
\end{align*}

It follows using the same argument as in Theorem~\ref{thm:uncorr-regret} that
\begin{multline*}
\sum_{t=1}^T \prob(\text{event}~\eqref{eq:Qi*-corr}) \le \max \Big\{
e^{ \frac{2 \beta_{i^*}^2 \subscr{\delta}{$i^*$-cond}^2}{\nu(1+ \subscr{\delta}{$i^*$-cond}^2)}}, e^{\frac{2 \beta_{i^*}^2}{3a}} 
\Big\} \\ 
+ \frac{3ac_1}{2(3ac_1 -4)}e^{\frac{c_2 \beta_{i^*}^2}{2}}.
\end{multline*}
Similarly, 
\begin{multline*}
\sum_{t=1}^T \prob(\text{event}~\eqref{eq:Qi-corr}, n_i(t) \ge 1) \le  e^{\frac{2 \beta_{i}^2}{3a}} 
+ \frac{3ac_1}{2(3ac_1 -4)}e^{\frac{c_2 \beta_{i}^2}{2}}.
\end{multline*}
Also, event~\eqref{eq:miComp-corr} is not true if 
\[
n_i(t) > \frac{4 \sigma_s^2 }{\Delta_i^2} (2 \log K + 2 a \log T ) - \nu.
\]
Adding the probabilities of the events~\eqref{eq:Qi*-corr}-\eqref{eq:miComp-corr}, we obtain the desired expression. 

\end{document}